\newtheorem{theorem}{Theorem}[section]
\newtheorem{lemma}[theorem]{Lemma}
\begin{document}

\title{Anti‑Ramsey Numbers for Spanning Linear Forests of 3‑Vertex Paths and Matchings}

\author{
Ali Ghalavand$^{a,}$\thanks{Corresponding author, email:\texttt{alighalavand@himis-sz.cn}}
\and  Xueliang Li$^{b,c}$}

\maketitle

\begin{center}
$^a$ Hetao Institute of Mathematics and Interdisciplinary Sciences (HIMIS), Shenzhen 518000, Guangdong, P. R. China\\
$^b$ Center for Combinatorics and LPMC, Nankai University, Tianjin 300071, China\\
$^c$ School of Mathematical Sciences, Xinjiang Normal University, Urumchi, Xinjiang 830017, China
\end{center}

\begin{abstract}
A subgraph in an edge-colored graph is called rainbow if all its edges have distinct colors. For a graph \(G\) and an integer \(n\), the anti‑Ramsey number \(\operatorname{AR}(n,G)\) is the maximum number of colors in an edge‑coloring of \(K_n\) that contains no rainbow copy of \(G\). We study \(\operatorname{AR}(n, kP_3 \cup tP_2)\), where \(kP_3 \cup tP_2\) is the linear forest of \(k\) disjoint paths on three vertices and a matching of size \(t\). Recently, Jie and Jin [Discrete Appl. Math. 386 (2026) 30–57] determined this number for \(k\ge 2\), \(t\ge\frac{k^2-3k+4}{2}\) and \(n=2t+3k\). Here we solve the spanning case \(n=3k+2t\) for all \(k\ge1\), \(t\ge2\) with no extra restrictions.
\end{abstract}
\noindent
\textbf{Keywords:} rainbow graph; anti-Ramsey number; spanning linear forest.

%\medskip\noindent
%\textbf{AMS Math.\ Subj.\ Class.\ (2020)}:

\section{Introduction}
In this study, we examine the anti-Ramsey number of forests composed solely of paths of lengths two and three. We will begin by defining key concepts and discussing their historical context.
Let \( G \) be a finite and simple graph with a vertex set \( V(G) \) and an edge set \( E(G) \). An edge-coloring of a graph \( G \) is a mapping \( c \) that assigns a color to each edge in \( E(G) \). The graph \( G \) is considered to be rainbow under the edge coloring \( c \) if all of its edges have different colors.
For an edge \( e \) in \( G \), the notation \( c(e) \) indicates the color assigned to the edge \( e \) by the map \( c \). For a subset \( E' \) of \( E(G) \), the notation \( c(E') \) represents the set of colors assigned to the edges in \( E' \), that is, \( c(E') = \{ c(e) : e \in E' \} \). Similarly, for a subgraph \( H \) of \( G \), the notation \( c(H) \) represents the set of colors assigned to the edges in \( H \), specifically, \( c(H) = \{ c(e) : e \in E(H) \} \).   
%%%%
Let \( V' \) be a subset of the vertex set \( V(G) \) of a graph \( G \). The subgraph of \( G \) that is induced by \( V' \) is denoted by \( G[V'] \). This subgraph has the vertex set \( V' \), and two vertices in it are considered adjacent if and only if they are adjacent in the original graph \( G \).
For another graph \( G' \), the disjoint union of \( G \) and \( G' \), denoted as \( G \cup G' \), is a new graph that includes both \( G \) and \( G' \). The vertex set of this new graph is given by \( V(G \cup G') = V(G) \cup V(G') \), while the edge set is defined as \( E(G \cup G') = E(G) \cup E(G') \). Additionally, for any positive integer \( a \), the notation \( aG \) refers to the disjoint union of \( a \) copies of the graph \( G \).
Let \( t \) be a positive integer. We will use the notation \( [t] \) to represent the set of natural numbers \( \{ 1, 2, \ldots, t \} \). For clarity, we define \( [0] = \emptyset \). The notations \( K_t \), \(C_t\), and \( P_t \) refer to the complete graph, the cycle graph, and the path graph on \( t \) vertices, respectively.  
%%%

Let \( n \) be a positive integer. The anti-Ramsey number of a graph \( G \), denoted as \( AR(n, G) \), represents the maximum number of colors in an edge-coloring of the complete graph \( K_n \) that does not contain a rainbow copy of \( G \). This concept was first introduced by Erdős et al. in 1975 \cite{Er1}, who established its connection to Turán numbers and conjectured the anti-Ramsey numbers for paths and cycles. Inspired by their work, a particularly active area of research has focused on characterizing anti-Ramsey numbers for graphs made up of multiple small, disjoint components. A key example of this is a matching, which consists of disjoint edges; the anti-Ramsey numbers for matchings in complete graphs are now fully understood, as referenced in \cite{p1-2,p1-5,p1-7,p1-9,p1-14}. In recent years, the focus has shifted from single matchings to more complex structures. Notably, researchers have begun exploring anti-Ramsey numbers for graphs formed by the vertex-disjoint union of small connected graphs and matchings.
Simonovits and Sós \cite{p2-10} proved that if \( t \geq 2 \) and \( n \) is sufficiently large, then the following holds:
\[
AR(n,P_t) = {\left\lfloor\frac{t-1}{2}\right\rfloor\choose2} + \left( \left\lfloor\frac{t-1}{2}\right\rfloor - 1 \right) \left(n - \left\lfloor\frac{t-1}{2}\right\rfloor + 1\right) + 1 + \epsilon,
\]
where \( \epsilon = 1 \) if \( t \) is even and \( \epsilon = 0 \) otherwise.
Montellano-Ballesteros and Neumann-Lara \cite{p3-6} proved that for any \( n \geq t \geq 3 \),
\[
AR(n,C_t) = {t-1\choose2} \left\lfloor\frac{n}{t-1}\right\rfloor + \left\lceil\frac{n}{t-1}\right\rceil + {n \mod (t-1) \choose 2}.
\]
Schiermeyer \cite{p2-9} showed that for all \( t \geq 2 \) and \( n \geq 3t + 3 \), the following holds:
\[
AR(n,tP_2) = {t-2 \choose 2} + (t-2)(n - t + 2) + 1.
\]
Then, Chen et al. \cite{p1-2} and Fujita et al. \cite{p1-5} demonstrated that for all \( t \geq 2 \) and \( n \geq 2t + 1 \):
\[
AR(n,tP_2) = \begin{cases} 
(t-2)(2t-3) + 1 & \text{when } n \leq \frac{5t-7}{2}, \\ 
(t-2)(n - \frac{t-1}{2}) + 1 & \text{when } n \geq \frac{5t-7}{2}. 
\end{cases}
\]
Finally, the remaining case \( n = 2t \) was addressed by Haas and Young \cite{p1-7}, who confirmed the conjecture made in \cite{p1-5}. They showed that if \( n = 2t \), then:
\[
AR(n,tP_2) = \begin{cases} 
\frac{1}{2}(t-2)(3t+1) + 1 & \text{when } 3 \leq t \leq 6, \\ 
(t-2)(2t-3) + 2 & \text{when } t \geq 7. 
\end{cases}
\]
Bialostocki et al. \cite{p1-1} examined the anti-Ramsey number of graphs with at most four edges and proved that \( AR(n,P_3 \cup P_2) = 2 \) when \( n \geq 5 \) and \( AR(n,P_3 \cup 2P_2) = n \) when \( n \geq 7 \).
For more general graph classes, Gilboa and Roditty \cite{p1-6} provided upper bounds on the anti-Ramsey numbers of \( L \cup tP_2 \) and \( L \cup tP_3 \) under specific conditions. They proved that, for sufficiently large \( n \),
\begin{enumerate}
 \item[$\bullet$] $AR(n,P_{k+1}\cup tP_3)=(t+\lfloor\frac{k}{2}\rfloor-1)(n-\frac{t+\lfloor\frac{k}{2}\rfloor}{2})+1+(k \mod 2)$ when $k\geq3$;
 \item[$\bullet$] $AR(n,kP_3\cup tP_2)=(k+t-2)(n-\frac{t+k-1}{2})+1$ when $k,t\geq2$;
 \item[$\bullet$] $AR(n,P_2\cup tP_3)=(t-1)(n-\frac{t}{2})+1$ when $t\geq1$;
  \item[$\bullet$] $AR(n,P_3\cup tP_2)=(t-1)(n-\frac{t}{2})+1$ when $t\geq2$;
  \item[$\bullet$] $AR(n,P_4\cup tP_2)=t(n-\frac{t+1}{2})+1$ when $t\geq1$;
   \item[$\bullet$] $AR(n,C_3\cup tP_2)=t(n-\frac{t+1}{2})+1$ when $t\geq1$;
 \item[$\bullet$] $AR(n, tP_3)=(t-1)(n-\frac{t}{2})+1$ when $t\geq1$.
  \end{enumerate}
%%%%
He and Jin \cite{p1-8} established that if \( t \geq 2 \) and \( n \geq 2t + 3 \), then 
\[
AR(n, P_3 \cup tP_2) = 
\begin{cases} 
t(2t-1) + 1 & \text{when } 2t + 3 \leq n \leq \left\lfloor \frac{5t + 2}{2} + \frac{1}{t-1} \right\rfloor, \\
\frac{1}{2} (t-1)(2n - t) + 1 & \text{when } n \geq \left\lceil \frac{5t + 2}{2} + \frac{1}{t-1} \right\rceil. 
\end{cases}
\]
They also proved that if \( t \geq 2 \) and \( n \geq 2t + 7 \), then 
\[
AR(n, 2P_3 \cup tP_2) = 
\begin{cases} 
(t+1)(2t + 3) + 1 & \text{when } 2t + 7 \leq n \leq \left\lfloor \frac{5t + 11}{2} + \frac{3}{t} \right\rfloor, \\
\frac{1}{2} t(2n - t - 1) + 1 & \text{when } n \geq \left\lceil \frac{5t + 11}{2} + \frac{3}{t} \right\rceil. 
\end{cases}
\]
Fang et al. \cite{p1-4} considered \( F \) to be a linear forest with components of order \( p_1, p_2, \ldots, p_t \), where \( t \geq 2 \) and \( p_i \geq 2 \) for \( 1 \leq i \leq t \), and at least one \( p_i \) is even. They proved that for sufficiently large \( n \), 
\[
AR(n, F) = \left( \sum_{i \in [t]} n \left\lfloor \frac{p_i}{2} \right\rfloor - \epsilon \right) n + {O}(1),
\] 
where \( \epsilon = 1 \) if all \( p_i \) are odd, and \( \epsilon = 2 \) otherwise.
Following this, Xie et al. \cite{p1-14} established an exact expression for the anti-Ramsey numbers of linear forests containing even components. They proved that if at least one \( p_i \) is even, where \( t \geq 2 \) and \( p_i \geq 2 \) for all \( 1 \leq i \leq t \), then for sufficiently large \( n \), 
\[
AR(n, F) = \binom{\sum_{i \in [t]} \left\lfloor \frac{p_i}{2} \right\rfloor - 2}{2} + \left( \sum_{i \in [t]} \left\lfloor \frac{p_i}{2} \right\rfloor - 2 \right) \left( n - \sum_{i \in [t]} \left\lfloor \frac{p_i}{2} \right\rfloor + 2 \right) + 1 + \epsilon,
\]
where \( \epsilon = 1 \) if exactly one \( p_i \) is even, or \( \epsilon = 0 \) if at least two \( p_i \) are even. Note that this expression does not account for the case when \( n = \sum_{i \in [t]} p_i \).
Define
\[
\lambda = \frac{9k+5t-7}{2} + \frac{k(k+1)}{2(k+t-2)}.
\]
Jie et al.~\cite{p1-10} proved that for \(k\ge 2\), \(t\ge \frac{k^2-k+4}{2}\), and \(n\ge 3k+2t+1\),
\[
\operatorname{AR}(n,kP_3\cup tP_2)=
\begin{cases}
	\frac12(3k+2t-3)(3k+2t-4)+1, & n\le\lfloor\lambda\rfloor,\\[4pt]
	\frac12(k+t-2)(2n-k-t+1)+1, & n\ge\lceil\lambda\rceil.
\end{cases}
\]
Two of the authors later in \cite{new-1} treated  the spanning case \(n=2t+3k\) with \(k\ge 2\), \(t\ge \frac{k^2-3k+4}{2}\), and proved
\[
\operatorname{AR}(2t+3k,kP_3\cup tP_2)=\frac12(2t+3k-3)(2t+3k-4)+1.
\]
The present paper addresses the complementary spanning case: for all \(k\ge 1\), \(t\ge 2\), and \(n=3k+2t\), we show that no additional relation between \(k\) and \(t\) is needed. Specifically, we prove the following theorem:
\begin{theorem}\label{th3}
For three positive integers \( k \), \( t \), and \( n \), if \( k \geq 1 \), \( t \geq 2 \), and \( n = 3k + 2t \), then
\[
AR(n, kP_3 \cup tP_2) = \frac{1}{2} (3k + 2t - 3)(3k + 2t - 4) + 1.
\]
\end{theorem}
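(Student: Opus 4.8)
The plan is to establish matching lower and upper bounds on $AR(n,kP_3\cup tP_2)$ for $n=3k+2t$. For the \textbf{lower bound}, I would exhibit a coloring of $K_n$ with exactly $\frac{1}{2}(3k+2t-3)(3k+2t-4)+1=\binom{n-3}{2}+1$ colors and no rainbow $kP_3\cup tP_2$. Fix three vertices $W=\{w_1,w_2,w_3\}$ and let $U=V(K_n)\setminus W$. I would color the $\binom{n-3}{2}$ edges inside $U$ with pairwise distinct colors and assign one further single color to every edge meeting $W$, for a total of $\binom{n-3}{2}+1$ colors. Since $kP_3\cup tP_2$ is spanning, any copy must cover $w_1,w_2,w_3$; as a single edge covers at most two of them, at least two $W$-incident edges are required, and these necessarily repeat the unique color on $W$-edges. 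Hence no rainbow copy exists and $AR(n,kP_3\cup tP_2)\ge\binom{n-3}{2}+1$.

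For the \textbf{upper bound} I would argue by contradiction: assume a surjective coloring $c$ of $K_n$ with $r\ge\binom{n-3}{2}+2$ colors and no rainbow $kP_3\cup tP_2$. Picking one representative edge per color gives a rainbow spanning subgraph $R$ with $r$ edges. I would then consider a rainbow linear forest $F$ whose components are copies of $P_2$ and $P_3$, with at most $t$ of the former and at most $k$ of the latter, chosen to maximize the number of covered vertices; if $F$ is spanning it has shape exactly $kP_3\cup tP_2$ and we are done. A useful preliminary observation is a parity/shape constraint: the number of vertices a forest of this shape can cover is $3x+2y$ with $0\le x\le k$, $0\le y\le t$, and since $3(k-x)+2(t-y)=1$ has no admissible solution, $F$ cannot cover exactly $n-1$ vertices. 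Thus either $F$ is spanning or it misses a set $S$ with $|S|\ge 2$, and the two maximal shapes to analyze correspond to $|S|=2$ (shape $(k,t-1)$, missing one $P_2$) and $|S|=3$ (shape $(k-1,t)$, missing one $P_3$).

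The heart of the proof is to show that, with $r\ge\binom{n-3}{2}+2$ colors, the uncovered set $S$ can always be absorbed, via an \emph{exchange/augmentation argument}: whenever an edge reaching $S$ carries a color not already used on $F$, one completes the forest directly; otherwise I would re-choose a representative of that color, or reroute a short path (using that a $P_3$ center admits two orientations and hence more flexibility than a leaf), to free a color and enlarge $F$, contradicting maximality. The complementary, extremal case is that no such augmentation exists; here I would translate non-coverability into a ceiling on the number of colors, arguing that all edges inside $S$ and between $S$ and $V\setminus S$ must then repeat colors already present on $K_n[V\setminus S]$. Since $|S|\ge 2$ forces the live colors to sit on a clique of order at most $n-2$ together with only a bounded surplus — and the tight case $|S|=3$ reproduces exactly the extremal coloring — this yields $r\le\binom{n-3}{2}+1$, the desired contradiction.

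The \textbf{main obstacle} is precisely this exchange analysis at the boundary $n=3k+2t$, where the fixed multiplicities (exactly $k$ triples and $t$ pairs) leave no slack, so every uncovered vertex is a genuine obstruction and near-extremal configurations must be ruled out one by one. Concretely, I expect a short but delicate case analysis according to $|S|\in\{2,3\}$ and the local color pattern on the edges incident to $S$, showing in each case that a single extra color beyond $\binom{n-3}{2}$ suffices to cover the last vertices. Because the statement assumes no relation between $k$ and $t$, the counting must be uniform in $k$ and $t$, which is what produces the clean global bound $\binom{n-3}{2}+1$ rather than a $k,t$-dependent threshold; an alternative route by induction on $k$ (deleting a cheap rainbow $P_3$ and reducing to $K_{n-3}$) is tempting, but the color accounting is tight — deleting three vertices exposes $3n-6$ edges while only $3(n-5)$ colors may be lost — so the deleted triple would have to be selected with care, and I would treat the direct augmentation argument as the primary line.
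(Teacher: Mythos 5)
Your lower bound is exactly the paper's construction (a rainbow $K_{n-3}$ plus one new color on all edges meeting the three leftover vertices), and the observation that a spanning copy of $kP_3\cup tP_2$ must use at least two same-colored edges at those vertices is correct. For the upper bound, however, your proposal has a genuine gap: the entire difficulty of the theorem lives inside the step you describe as ``a short but delicate case analysis,'' and the sketch you give of it contains an oversimplification that would not survive execution. Specifically, when augmentation fails you claim that all edges inside $S$ and between $S$ and $V\setminus S$ must repeat colors already present on $K_n[V\setminus S]$, and that this caps $r$ at $\binom{n-3}{2}+1$. That is not forced: with $|S|=3$ and $F$ of shape $(k-1)P_3\cup tP_2$, a single edge of $K_n[S]$ may carry a brand-new color without creating a rainbow $P_3$ in $S$ (you need \emph{two} new colors there), and the colors of the remaining $S$-edges may coincide with colors of $F$-edges of many different types ($P_2$-components, end-edges or both edges of a $P_3$-component, or each other), each type blocking a different family of rerouting moves. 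The paper's proof of precisely this step runs through sixteen scenarios, each with up to fifteen sub-conditions, and the final count $r\le\binom{n-3}{2}+1$ only emerges after tallying the forbidden edges separately in each configuration; your outline does not identify these configurations, so the contradiction is asserted rather than derived. A secondary loose end: a maximum rainbow forest of the allowed shape need not miss only $2$ or $3$ vertices, so the reduction to $|S|\in\{2,3\}$ also needs an argument.

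It is worth noting that the paper takes the route you set aside: induction on $k$, with the base case $k=1$ imported from He and Jin's formula for $AR(n,P_3\cup tP_2)$, and with the tight color accounting you worried about resolved by a separate lemma showing that a $K_{n-3}$ chosen to maximize retained colors keeps at least $\binom{n-6}{2}+2$ of them (your computation that at most $3(n-5)$ colors may be lost while $3n-6$ edges disappear is exactly the tension that lemma overcomes, via a max-degree exchange on a rainbow representative subgraph). The induction then still requires absorbing a $3$-vertex set $S$ into a rainbow $(k-1)P_3\cup tP_2$, which is the same combinatorial core as your augmentation step; so even on the paper's route there is no shortcut around the long case analysis, and your proposal as written does not supply it.
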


\section{Key Lemma and Theorem}
In this section we collect the two essential ingredients for the proof of the main theorem. The first is a result of He and Jin \cite{p1-8}; the second is a new lemma that we establish here.

\begin{theorem}{\rm \cite{p1-8}}\label{rth1}
For any positive integer $t$, if $t\geq2$ and $\mu=\frac{5t+2}{2}+\frac{1}{t-1}$, then
\[AR(n, P_3 \cup tP_2) =\left\{\begin{array}{ll}
  t(2t-1)+1&{\rm ~when~} 2t+3 \leq n\leq\lfloor\mu\rfloor,  \\
 (t-1)(n-\frac{t}{2})+1&  {\rm ~when~ } n\geq\lceil\mu\rceil.
 \end{array}\right.\]
\end{theorem}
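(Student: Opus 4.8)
The plan is to prove the two bounds separately: the lower bound by an explicit construction, and the upper bound by induction on $k$ with Theorem~\ref{rth1} supplying the base case. For the lower bound, fix three vertices $\{x,y,z\}$ of $K_n$, rainbow-color the edges of the clique $K_n[V\setminus\{x,y,z\}]$ with $\binom{n-3}{2}$ distinct colors, and give every remaining edge (i.e.\ every edge meeting $\{x,y,z\}$) one additional color. This uses $\binom{n-3}{2}+1=\tfrac12(3k+2t-3)(3k+2t-4)+1$ colors. Since $kP_3\cup tP_2$ is a \emph{spanning} forest of $K_n$ (it has exactly $3k+2t=n$ vertices), any copy must cover $x,y,z$, and covering three vertices needs at least two edges meeting $\{x,y,z\}$; as all such edges share one color, no copy is rainbow, so $AR(n,kP_3\cup tP_2)\ge\binom{n-3}{2}+1$.

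For the upper bound I would induct on $k$. When $k=1$ we have $n=2t+3$, and a short check shows that at $n=2t+3$ the value $\mu=\frac{5t+2}{2}+\frac{1}{t-1}$ satisfies $n\le\lfloor\mu\rfloor$ for every $t\ge2$, so Theorem~\ref{rth1} yields $AR(2t+3,P_3\cup tP_2)=t(2t-1)+1=\binom{2t}{2}+1=\binom{n-3}{2}+1$, matching the claim. For the step ($k\ge2$), suppose for contradiction that $c$ colors $K_n$ with $\binom{n-3}{2}+2$ colors and has no rainbow $kP_3\cup tP_2$. The idea is to delete a carefully chosen triple $X=\{a,b,c\}$, apply the inductive hypothesis to the induced $K_{n-3}$ on $Y=V\setminus X$ (note $n-3=3(k-1)+2t$) to obtain a rainbow $(k-1)P_3\cup tP_2$ with color set $C_0$ of size $2k+t-2$, and then attach a $P_3$ spanning $X$. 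Because any two edges of the triangle on $X$ already form a $P_3$ covering $X$, it suffices to find $X$ so that: (i) the induced coloring on $Y$ uses at least $\binom{n-6}{2}+2$ colors, so the inductive hypothesis applies; and (ii) the triangle on $X$ contains two edges of distinct colors not occurring on any edge inside $Y$. Condition (ii) forces these two colors outside $C_0\subseteq c(K_n[Y])$, making the assembled $kP_3\cup tP_2$ rainbow, a contradiction.

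The technical heart, and the second tool I must supply myself, is a reduction lemma guaranteeing such a triple $X$. For (ii) I would arrange two adjacent edges whose colors are \emph{private} to $X$ (appear only on edges meeting $X$); a natural source is a pair of adjacent edges each of which is the unique edge of its color, produced from a counting bound on the numbers of singleton and star color classes that is forced by the large total $\binom{n-3}{2}+2$. For (i), the requirement is that deleting $X$ destroys at most $3n-15$ colors, since $\binom{n-3}{2}-\binom{n-6}{2}=3n-15$; this follows from an averaging argument showing some triple is confined to few colors, valid once $n\ge10$, i.e.\ for all $k\ge2$.

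I expect the main obstacle to be enforcing (i) and (ii) \emph{simultaneously}: the averaging bound for (i) is tight only up to an additive constant, and the supply of private-color $P_3$'s in (ii) can be thin when few color classes are singletons. The cleanest route is therefore to prove the reduction lemma by a short case analysis—treating separately the regime with many singleton color classes (where private-color $P_3$'s are abundant) and the regime with colors spread out (where any triple can lose only few colors)—and to check the smallest instances ($k=2$, and $n$ near $10$) directly so that the borderline constants in the averaging step cause no trouble.
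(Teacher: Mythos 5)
Your proposal does not prove the statement in question at all. The statement is Theorem~\ref{rth1}, the formula for $AR(n,P_3\cup tP_2)$ due to He and Jin, which this paper does not prove but imports verbatim from \cite{p1-8} as a tool. Your argument instead sketches a proof of the paper's main result, Theorem~\ref{th3} (the case $n=3k+2t$ of $AR(n,kP_3\cup tP_2)$), and it does so \emph{by invoking Theorem~\ref{rth1} as the base case} of the induction on $k$. As a proof of Theorem~\ref{rth1} this is circular: you assume the very statement you were asked to establish. A genuine proof of Theorem~\ref{rth1} would have to supply, for a single $P_3$ plus a matching, two different lower-bound constructions (one for the dense regime $2t+3\le n\le\lfloor\mu\rfloor$ giving $t(2t-1)+1$ colors, one for the sparse regime $n\ge\lceil\mu\rceil$ giving $(t-1)(n-\tfrac{t}{2})+1$ colors), locate the threshold $\mu=\frac{5t+2}{2}+\frac{1}{t-1}$ where the two bounds cross, and prove matching upper bounds in each regime; none of this appears in your write-up.

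As a secondary remark: read as a sketch of Theorem~\ref{th3}, your outline does parallel the paper's actual structure — the same lower-bound construction (rainbow $K_{n-3}$ plus one extra color on all edges meeting a fixed triple), the same induction on $k$ with the $k=1$ base case supplied by Theorem~\ref{rth1} (and your verification that $n=2t+3\le\lfloor\mu\rfloor$ and $t(2t-1)+1=\binom{n-3}{2}+1$ is correct), and a deletion lemma playing the role of the paper's Lemma~\ref{lm1} for your condition (i). However, your condition (ii) — finding a $P_3$ on the deleted triple whose two colors are \emph{private}, i.e.\ absent from the inner clique — corresponds only to the paper's easiest case (Scenario 2.1). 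The paper's upper-bound argument does not, and cannot, insist on private colors: when the triangle's colors collide with $c(H)$, it instead performs an extensive case analysis (Scenarios 2.2 through 2.16) that repairs the embedding by swapping edges of $H$, exploiting edges of the rainbow spanning subgraph $G$ between $S$ and $V(H)$, and counting forbidden edges to derive a contradiction with $|c(K_n)|$. Your own closing paragraph correctly identifies that enforcing (i) and (ii) simultaneously is the obstacle; the paper's sixteen-scenario analysis is precisely the machinery that replaces the private-color hypothesis, and without something of that scale your reduction lemma remains unproved even for Theorem~\ref{th3}.
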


\begin{lemma}\label{lm1}
For three positive integers \( k \), \( t \), and \( n \), where $k\geq1$, \(t \geq 2\), and \(n = 3k + 2t\), let \( c \) be an edge-coloring of \( K_n \) such that 
\[
|c(K_n)| \geq \frac{1}{2} (3k + 2t - 3)(3k + 2t - 4) + 2.
\]
If \( K_{n-3} \) is a subgraph of \( K_n \) such that 
$|c(K_{n-3})|$ is maximized, then it holds that
\[
|c(K_{n-3})| \geq \frac{1}{2}(3k + 2t - 6)(3k + 2t - 7) + 2.
\]
\end{lemma}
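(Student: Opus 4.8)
The plan is to translate the statement into counting edges of a fixed rainbow representative subgraph and then to work in the complement, where the relevant graph is sparse. Writing $n = 3k+2t$, observe that $\frac{1}{2}(3k+2t-3)(3k+2t-4) = \binom{n-3}{2}$ and $\frac{1}{2}(3k+2t-6)(3k+2t-7) = \binom{n-6}{2}$, so the hypothesis reads $|c(K_n)| \ge \binom{n-3}{2}+2$, and the goal is to exhibit a copy of $K_{n-3}$, obtained by deleting a $3$-set $S$ of vertices, on which $c$ uses at least $\binom{n-6}{2}+2$ colors; since the lemma takes the color-maximizing $K_{n-3}$, producing one such copy suffices.

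First I would fix a rainbow subgraph $R \subseteq K_n$ containing exactly one edge of each color, so that $|E(R)| = |c(K_n)| \ge \binom{n-3}{2}+2$. Let $\overline{R}$ be the spanning subgraph of $K_n$ whose edges are those of $K_n$ not in $R$; then $|E(\overline{R})| = \binom{n}{2} - |E(R)| \le \binom{n}{2} - \binom{n-3}{2} - 2 = 3n - 8$. For any $3$-set $S$, every edge of $K_{n-3} := K_n[V(K_n)\setminus S]$ lies in $R$ or in $\overline{R}$, so the number of $R$-edges surviving inside $K_{n-3}$ equals $\binom{n-3}{2} - e_{\overline{R}}(S)$, where $e_{\overline{R}}(S)$ denotes the number of $\overline{R}$-edges with both endpoints in $V(K_n)\setminus S$. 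As distinct $R$-edges carry distinct colors, $|c(K_{n-3})| \ge \binom{n-3}{2} - e_{\overline{R}}(S)$. Hence it is enough to find $S$ with $e_{\overline{R}}(S) \le \binom{n-3}{2} - \binom{n-6}{2} - 2 = 3n - 17$.

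To produce such an $S$, I would delete vertices of $\overline{R}$ greedily by maximum degree. Removing a maximum-degree vertex from a graph with $f$ edges on $m$ vertices destroys at least $2f/m$ edges and leaves at most $f\cdot\frac{m-2}{m}$; iterating this three times from $\overline{R}$ yields a $3$-set $S$ with
\[
e_{\overline{R}}(S) \le |E(\overline{R})|\cdot\frac{n-2}{n}\cdot\frac{n-3}{n-1}\cdot\frac{n-4}{n-2} = |E(\overline{R})|\cdot\frac{(n-3)(n-4)}{n(n-1)}.
\]
Using $|E(\overline{R})| \le 3n-8$ and $n = 3k+2t \ge 7$, the desired bound $e_{\overline{R}}(S) \le 3n-17$ reduces to $(3n-8)(n-3)(n-4) \le (3n-17)\,n(n-1)$, i.e. $9n^2 - 75n + 96 \ge 0$, whose larger root is below $7$, so it holds for every integer $n \ge 7$. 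Since $e_{\overline{R}}(S)$ is an integer bounded by this real quantity, we get $e_{\overline{R}}(S) \le 3n-17$, and therefore the $K_{n-3}$ obtained by deleting $S$ satisfies $|c(K_{n-3})| \ge \binom{n-6}{2}+2$; a fortiori the color-maximizing $K_{n-3}$ does as well.

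I expect the main obstacle to be not the greedy deletion itself (which need only produce some valid $S$, not the optimal one) but the tightness of the constants for small $n$: the quadratic $9n^2-75n+96$ is only mildly positive at $n=7$, so the margin is slim, and it is precisely the ``$+2$'' in the hypothesis, which forces $|E(\overline{R})| \le 3n-8$ rather than $3n-7$, that keeps the inequality comfortably valid across the whole range $n \ge 7$. If any case failed the clean telescoping estimate, I would fall back on a direct degree-sequence analysis of the sparse graph $\overline{R}$, but the computation above indicates this will not be necessary.
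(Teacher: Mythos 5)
Your proof is correct, but it takes a genuinely different route from the paper's. The paper fixes a rainbow spanning subgraph $G$ with $|E(G)|=|c(K_n)|$, chooses the $K_{n-3}$ maximizing $|E(K_{n-3})\cap E(G)|$, and runs an exchange argument: if the count were below $\binom{n-6}{2}+2$ and some vertex $v$ of $K_{n-3}$ had $G$-degree at most $n-7$ inside it, then the at least $3n-14$ edges of $G$ leaving $K_{n-3}$ would force some outside vertex $x$ with at least $n-5$ edges of $G$ into $K_{n-3}$, and swapping $v$ for $x$ contradicts maximality; otherwise every vertex has inside-degree at least $n-6$ and the bound follows by summing degrees. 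You instead pass to the complement $\overline{R}$ of the rainbow subgraph, note it has at most $3n-8$ edges, and delete three vertices greedily by maximum degree, with the telescoping factor $\frac{(n-3)(n-4)}{n(n-1)}$ reducing everything to the single quadratic inequality $9n^2-75n+96\ge 0$, valid for all $n\ge 7=3\cdot1+2\cdot2$. Both arguments exploit exactly the same numerology (the ``$+2$'' caps the complement at $3n-8$ edges and the target gap at $3n-17$), but yours is a global averaging argument that produces one good $3$-set directly and would extend mechanically to deleting any number of vertices, whereas the paper's is a local maximality argument whose Case I degree-sum bound $\tfrac{1}{2}(n-3)(n-6)$ is slightly lossier but ties the lemma to the specific edge-maximizing copy. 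One small point of care in your write-up: the iterated bound requires observing that $f\mapsto f(m-2)/m$ is monotone in $f$ so the upper bounds compose, which you implicitly use and which is indeed fine.
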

\begin{proof}
  For three positive integers \(k\), \(t\), and \(n\) where $k\geq1$, \(t \geq 2\), and \(n = 3k + 2t\), let \(c\) be an edge-coloring of \(K_n\) such that 
\[
|c(K_n)| \geq \frac{1}{2}(3k + 2t - 3)(3k + 2t - 4) + 2.
\]
Assume that \(G\) is a rainbow spanning subgraph of size \(|c(K_n)|\) of \(K_n\), and let \(K_{n-3}\) be a subgraph of \(K_n\) such that \(|E(K_{n-3}) \cap E(G)|\) is maximized. To prove our lemma, it suffices to demonstrate that 
\[
|E(K_{n-3}) \cap E(G)| \geq \frac{1}{2}(3k + 2t - 6)(3k + 2t - 7) + 2.
\]
To establish this,  we will analyze the situation in two distinct cases as follows:

\noindent
{\bf Case I:} For every vertex \(v\) of \(K_{n-3}\), it holds that
   \[
   |\{uv : u \in V(K_{n-3})\} \cap E(G)| \geq 3k + 2t - 6.
   \]
   In this case, since $k\geq1$ and \(t \geq 2\), it follows that 
   \[
   |E(K_{n-3}) \cap E(G)| \geq \frac{1}{2}(3k + 2t - 3)(3k + 2t - 6) \geq \frac{1}{2}(3k + 2t - 6)(3k + 2t - 7) + 2,
   \]
   which is as desired.
   
   \noindent
 {\bf Case II:} For a vertex \(v\) of \(K_{n-3}\), it holds that
   \[
   |\{uv : u \in V(K_{n-3})\} \cap E(G)| \leq 3k + 2t - 7,
   \]
   and it also holds that 
   \[
   |E(K_{n-3}) \cap E(G)| \leq \frac{1}{2}(3k + 2t - 6)(3k + 2t - 7) + 1.
   \]
   In this case, since \(|E(G)| = |c(K_n)| \geq \frac{1}{2}(3k + 2t - 3)(3k + 2t - 4) + 2\), it follows that 
   \[
   |E(G) - E(K_{n-3})| \geq 9k + 6t - 14.
   \]
   Thus, there exists a vertex \(x\) in \(V(K_n) - V(K_{n-3})\) such that 
   \[
   |\{ux : u \in V(K_{n-3})\} \cap E(G)| \geq 3k + 2t - 5.
   \]
   However, this leads to a contradiction regarding the maximality, because using the vertices \(v\) and \(x\), we can construct a graph \(K_{n-3}\) such that 
   \(|E(K_{n-3}) \cap E(G)|\) is larger.
   
Now, based on these two cases, we can conclude that 
\[
|E(K_{n-3}) \cap E(G)| \geq \frac{1}{2}(3k + 2t - 6)(3k + 2t - 7) + 2,
\]
therefore completing the proof.
\end{proof}

\section{Proof of Theorem~\ref{th3}}
For three positive integers \( k \), \( t \), and \( n \), assume $k\geq1$, \( t \geq 2 \) and \( n = 3k + 2t \). To prove our result, we need to confirm two cases: \\
1. \( AR(n, kP_3 \cup tP_2) \geq \frac{1}{2}(3k + 2t - 3)(3k + 2t - 4) + 1 \),\\
2. \( AR(n, kP_3 \cup tP_2) \leq \frac{1}{2}(3k + 2t - 3)(3k + 2t - 4) + 1 \).\\
We will address these cases as follows:

\noindent
{\bf Case 1:} To establish this case, suppose \( c \) is an edge-coloring of \( K_n \) such that for a subgraph \( K_{n-3} \) of \( K_n \), \( K_{n-3} \) is rainbow, and all other edges of \( K_n \) that are not in \( K_{n-3} \) are colored with a new color. It is evident that \( |c(K_n)| = \frac{1}{2}(3k + 2t - 3)(3k + 2t - 4) + 1 \) and that there is no rainbow subgraph of \( K_n \) isomorphic to \( kP_3 \cup tP_2 \). Thus, we conclude that \( AR(n, kP_3 \cup tP_2) \geq \frac{1}{2}(3k + 2t - 3)(3k + 2t - 4) + 1 \).

\noindent
{\bf Case 2:} To prove this case, we need to show that if \( c \) is an arbitrary edge-coloring of \( K_n \) such that \( |c(K_n)| = \frac{1}{2}(3k + 2t - 3)(3k + 2t - 4) + 2 \), then \( K_n \) has a rainbow subgraph isomorphic to \( kP_3 \cup tP_2 \). We will use induction on \( k \). If \( k = 1 \), Theorem \ref{rth1} provides the result. Now, let \( k \geq 2 \), and consider \( K_{n-3} \), a subgraph of \( K_n \), that has a maximized \( |c(K_{n-3})| \). Assume \( S = V(K_n) - V(K_{n-3}) = \{s_1, s_2, s_3\} \). By applying Lemma \ref{lm1}, we find that \( |c(K_{n-3})| \geq \frac{1}{2}(3(k-1) + 2t - 3)(3(k-1) + 2t - 4) + 2 \). Thus, by the inductive hypothesis, the graph \( K_{n-3} \) contains a rainbow subgraph \( H \) such that \( H \cong (k-1)P_3 \cup tP_2 \).
 Now, assume $H_1$ and $H_2$ are two distinct subgraphs of $H$ that $H_1=(k-1)P_3$ and $H_2=tP_2$. In addition, suppose $H_1=\{P_3^i:i\in[k-1]\}$, $H_2=\{P_2^j:j\in[t]\}$, $E(P_3^i)=\{x_1^ix_2^i,x_2^ix_3^i\}$ for $i\in[k-1]$, and   $E(P_2^j)=\{y_1^jy_2^j\}$ for $j\in[t]$. Also, suppose \( G \) is a rainbow spanning subgraph of size \( |c(K_n)| \) from \( K_n \) that satisfies \( E(H) \subseteq E(G) \). To complete the proof in this case, we will examine sixteen scenarios as outlined below. Please note that the notation \( \mathcal{S}_{2.x}(l_1, \ldots, l_h) \), which we may use in Scenario 2.x, represents the largest subset of the edge set of \( K_n \) that we can derive from the conditions \( l_1 \) to \( l_h \), such that \( \mathcal{S}_{2.x}(l_1, \ldots, l_h) \cap E(G) = \emptyset \).\\ 
{\bf Scenario 2.1:} \(| c(K_n[S])- c(H)|\geq2  \). In this scenario, the graph \( K_n[S] \) contains a rainbow subgraph \( F_1 \) that \( F_1 \cong P_3 \) and $c(F_1)\cap c(H)=\emptyset$. Therefore, \( H \cup F_1 \) forms a rainbow subgraph of \( K_n \) that is isomorphic to \( kP_3 \cup tP_2 \). \\
{\bf Scenario 2.2:}  \( |c(K_n[S])| = 3 \) and \( |c(K_n[S]) - c(H)| = 1 \). For \( l \in [k-1] \), it holds that \( |c(K_n[S]) \cap c(P_3^l)| = 2 \). We also assume that \( c(s_1s_2) \notin c(H) \). If one of the following conditions is satisfied, then it is possible to construct a rainbow subgraph of \( K_n \) isomorphic to \( kP_3 \cup tP_2 \).
\begin{enumerate}
  \item There are $i\in[t]$, $i_1\in[2]$, and $z\in\{s_3,x^l_1,x^l_3\}$ such that $zy^i_{i_1}\in E(G)$ and $c(zy^i_{i_1})\neq c(s_1s_2)$.
 \item There are $i \in ([k-1]-\{l\})$, $j \in [t]$, $j_1\in[2]$, $j_2\in\{1,3\}$, $j_3\in(\{1,3\}-\{j_2\})$ and $z\in\{s_3,x^l_1,x^l_3\}$ such that $\{z x_{j_2}^i,x_{j_3}^i y_{j_1}^j\}\subseteq E(G)$ and $c(s_1s_2)\not\in c(\{z x_{j_2}^i,x_{j_3}^i y_{j_1}^j\})$.
  \item There are $i \in ([k-1]-\{l\})$, $j \in [t]$, $j_1\in[2]$, $j_2\in\{1,3\}$ and $z\in\{s_3,x^l_1,x^l_3\}$ such that $\{z x_{2}^i,x_{j_2}^i y_{j_1}^j\}\subseteq E(G)$ and $c(s_1s_2)\not\in c(\{z x_{2}^i,x_{j_2}^i y_{j_1}^j\})$.
  \item There are $i \in ([k-1]-\{l\})$, $j \in [t]$, $j_1\in[2]$, and $z\in\{s_3,x^l_1,x^l_3\}$ such that $\{z x_{1}^i,zx_3^i,x_{2}^i y_{j_1}^j\}\subseteq E(G)$ and $c(s_1s_2)\not\in c(\{z x_{1}^i,zx_3^i,x_{2}^i y_{j_1}^j\})$.
  \item There are $i\in[t]$, $i_1\in[2]$, $i_2\in([2]-\{i_1\})$, and $i_{3}\in\{1,3\}$ such that either $\{x^l_{i_3}s_1,s_1y^i_{i_1},x^l_2y^i_{i_2}\}$, $\{x^l_{i_3}s_1,s_2y^i_{i_1},x^l_2y^i_{i_2}\}$, or $\{x^l_{i_3}s_1,s_1y^i_{i_1},s_2y^i_{i_2}\}$ is a subset of $E(G)$.
  \item There are $i\in[t]$ and $i_1\in[2]$ such that $\{x^l_1s_2,s_1y^i_{i_1}\}$ is a subset of $E(G)$.
   \item There are $i\in[t]$, $i_1\in[2]$, and $i_2\in([2]-\{i_1\})$ such that either $\{x^l_{3}s_2,s_1y^i_{i_1},$ $x^l_2y^i_{i_2}$ $\}$, $\{x^l_{3}s_2,s_2y^i_{i_1},$ $x^l_2y^i_{i_2}$ $\}$, or $\{x^l_{3}s_2,s_1y^i_{i_1},s_2y^i_{i_2}\}$ is a subset of $E(G)$.
   \item There are $i\in[t]$ and $i_1\in[2]$ such that $\{x^l_{1}s_3,s_1y^i_{i_1}\}$ is a subset of $E(G)$.
   \item There are $i\in[t]$ and $i_1\in[2]$ such that $\{x^l_{3}s_3,s_2y^i_{i_1}\}$ is a subset of $E(G)$.
   \item Both \( x_1^lx_3^l\in E(G) \) and $c(x_1^lx_3^l)\neq c(s_1s_2)$ hold.
   \item There are $i\in[t]$ and $i_1\in[2]$ such that \(c(x_1^l x_3^l) \in c(K_n[S])\), $x^l_2y^i_{i_1}\in E(G)$, and $c(x^l_2y^i_{i_1})\neq c(s_1s_2)$.
    \item There are \( i\in[t] \), \( j \in ([t] - \{i\}) \), $j_1\in[2]$, and $j_2\in([2]-\{j_1\})$ such that \( c(x_1^l x_3^l) = c(y_1^i y_2^i) \), $y_{1}^i y_{j_1}^j,y_{2}^i y_{j_2}^j\in E(G)$,  and $c(s_1s_2)\not\in c(\{y_{1}^i y_{j_1}^j,y_{2}^i y_{j_2}^j\})$.
    \item There is $i\in[t]$ such that $c(x_1^lx_3^l)=c(y^i_1y^i_2)$, $y^i_1x_2^l,y^i_2x_2^l\in E(G)$, and $c(s_1s_2)\not\in c(\{y^i_1x_2^l,y^i_2x_2^l\})$.
    \item There are $i\in[t]$, \( j \in ([k - 1] - \{ l \}) \), $i_1\in[2]$, \( j_1\in( [3]-\{2\}) \) such that \( c(x_1^l x_3^l) = c(x^j_{j_1} x^j_2) \), $x^j_{j_1} y^i_{i_1}\in E(G)$, and $c(s_1s_2)\neq c(x^j_{j_1} y^i_{i_1})$.
     \item There are \( i \in( [k-1] - \{l\}) \) and \( i_1 \in ([3] - \{2\}) \) such that \( c(x_1^l x_3^l) = c(x^i_{i_1} x^i_2) \),  \( x_1^i x_3^i\in E(G) \), and $c(s_1s_2)\neq c(x_1^i x_3^i)$.
\end{enumerate}
If none of the above conditions hold, we can observe that $|\mathcal{S}_{2.2}(1)| \geq 6t$, $|\mathcal{S}_{2.2}(2,3)| \geq 9(k-2)$, and $|\mathcal{S}_{2.2}(4,\ldots,15) \cup \{s_1s_3,s_2s_3\}| \geq 11$. Thus, we find that
\begin{align*}
|c(K_n)|&\leq  \frac{1}{2}(3k+2t)(3k+2t-1)-\big(6t+9(k-2)+11\big)\\
&=\frac{1}{2}(3k+2t-3)(3k+2t-4)+1,\text{a contradiction regarding}~|c(K_n)|.
\end{align*}
{\bf Scenario 2.3:}  $|c(K_n[S])|=3$, \(| c(K_n[S])- c(H)|=1 \), and  $|c(K_n[S])\cap c(P_2^a\cup P_2^b)|=2$, where $a,b\in[t]$. Assume \( c(s_1s_2) \notin c(H) \), $c(s_1s_3)=c(y^a_1y^a_2)$, and $c(s_2s_3)=c(y^b_1y^b_2)$. In this scenario, if any of the following conditions hold, we can construct a rainbow subgraph of \( K_n \) that is isomorphic to \( kP_3 \cup tP_2 \).
\begin{enumerate}
  \item There are $i\in[t]$ and $i_1\in[2]$ such that $s_3 y^i_{i_1} \in E(G)$ and $c(s_1s_2)\neq c(s_3 y^i_{i_1})$.
  \item There are $i\in([t]-\{a,b\})$, $i_1\in[2]$, $i_2\in([2]-\{i_1\})$, and $j\in\{a,b\}$ such that $\{y^i_{i_1}y^j_{1},y^i_{i_2}y^j_{2}\}\subseteq E(G)$ and $c(s_1s_2)\not\in c(\{y^i_{i_1}y^j_{1},y^i_{i_2}y^j_{2}\})$. 
  \item There are $i \in [k-1]$ and $j\in\{a,b\}$ such that for $A_1=\{y^j_{1}x^i_{2},y^j_{2}x^i_{2}, x^i_1x^i_3\}$, it holds that $A_1\subseteq E(G)$ and $c(s_1s_2) \notin c(A_1)$.
  \item There are $i \in [k-1]$, $j \in \{a,b\}$, $j_1\in[2]$,  $j_2\in\{1,3\}$, and $j_3\in([3]-\{j_2\})$ such that for $A_2=\{y^j_{j_1}x^i_{j_2}, s_3x^i_{j_3}\}$, it holds that $A_2\subseteq E(G)$ and $c(s_1s_2) \notin c(A_2)$.
  \item There are $i \in [k-1]$ and $j \in [3]$ such that for $A_3=\{y^a_1x^i_{2}, y^a_2x^i_{2},y^b_1x^i_{2}, y^b_2x^i_{2},s_jx^i_1,$ $s_jx^i_2\}$, it holds that $A_3\subseteq E(G)$ and $c(s_1s_2) \notin c(A_3)$.
  \item There are $i \in [k-1]$, $j\in\{a,b\}$, $j_1 \in [3]$, $j_2\in([3]-\{j_1\})$ such that for $A_4=\{y^j_1x^i_{j_1}, y^j_2x^i_{j_2}\}$, it holds that $A_4\subseteq E(G)$ and $c(s_1s_2) \notin c(A_4)$.
  \item It holds that either $\{y^a_1s_2,y^a_2s_2\}$ or $\{y^b_1s_1,y^b_2s_1\}$ is a subset of $E(G)$.
  \item There are $j\in\{a,b\}$, $i_1\in[2]$, and $i_2\in([2]-\{i_1\})$ such that $\{y^j_{i_1}s_1,y^j_{i_2}s_2\}$ $\subseteq$ $E(G)$.
  \item There are $i_1\in[2]$ and $i_2\in([2]-\{i_1\})$ such that $\{y^a_1y^b_{i_1},y^a_2y^b_{i_2}\}\subseteq E(G)$.
  \end{enumerate}
  Assuming that none of the previous statements is true, we obtain that $|\mathcal{S}_{2.3}(1,2)| \geq 2t + 4(t - 2)$, $|\mathcal{S}_{2.3}(3,\ldots,6)| \geq 11(k-1)$, and $|\mathcal{S}_{2.3}(7,8,9) \cup \{s_1s_3,s_2s_3\}| \geq 8$. From this and $k\geq2$, we can observe that
  \begin{align*}
  |c(K_n)|\leq & \frac{1}{2}(3k+2t)(3k+2t-1)-\big(2t+4(t-2)+11(k-1)+8\big)\\
  =& \frac{1}{2}(3k+2t-3)(3k+2t-4)+2-(2k-3)\\
  \leq &\frac{1}{2}(3k+2t-3)(3k+2t-4)+1,\text{a contradiction}.
\end{align*}
{\bf Scenario 2.4:} \(|c(K_n[S])|=3\) and \(|c(K_n[S]) - c(H)|=1\). Additionally, for \(a\in[t]\) and \(b\in[k-1]\), \(c(P_2^a) \in c(K_n[S])\) and \(|c(K_n[S]) \cap c(P_3^b)|=1\). We suppose that \(c(s_1s_2) \notin c(H)\), \(c(y_1^a y_2^a) = c(s_1s_3)\), and \(c(x_1^b x_2^b) = c(s_2s_3)\). Under this scenario, if one of the following holds, then we can build a rainbow subgraph of \(K_n\) that is isomorphic to \(kP_3 \cup tP_2\).
\begin{enumerate}
 \item There are $i\in[t]$, $i_1\in[2]$, and $z\in\{s_3,x^b_1\}$ such that $z y^i_{i_1} \in E(G)$ and $c(s_1s_2)\neq c(z y^i_{i_1})$.
 \item There are $i\in([t]-\{a\})$, $i_1\in[2]$, and $i_2\in([2]-\{i_1\})$ such that $\{y^i_{i_1}y^a_1,y^i_{i_2}y^a_2\}\subseteq E(G)$ and $c(s_1s_2)\not\in c(\{y^i_{i_1}y^a_1,y^i_{i_2}y^a_2\})$.
 \item For  $i\in ([k-1]-\{b\})$, there is a subset $B_1$ of $\{y^a_{a_1}x^i_{i_1}:a_1\in[2],i_1\in[3]\}\cup\{x^i_1x^i_3\}$ such that $|B_1|=3$, $B_1\subseteq E(G)$, and $c(s_1s_2)\notin c(B_1)$.
 \item For $i\in ([k-1]-\{b\})$, there is a subset $B_2$ of $\{s_{i_1}x^i_{i_2}:i_1,i_2\in[3]\}$ such that $|B_2|=7$, $B_2\subseteq E(G)$, and $c(s_1s_2)\notin c(B_2)$.
 \item For $i\in ([k-1]-\{b\})$, there is a subset $B_3$ of $\{x^b_{i_1}x^i_{i_2}:i_1,i_2\in[3]\}$ such that $|B_3|=7$, $B_3\subseteq E(G)$, and $c(s_1s_2)\notin c(B_3)$.
\item For $a_1\in[2]$ and $a_2\in([2]-\{a_1\})$, either $\{y^a_{a_1}s_1,y^a_{a_2}s_2\}\subseteq E(G)$ or $\{y^a_{1}s_2,y^a_{2}s_2\}\subseteq E(G)$ holds.
\item For $a_1\in[2]$ and $a_2\in([2]-\{a_1\})$, either $\big(\{y^a_{a_1}x^b_2,y^a_{a_2}x^b_3\}\subseteq E(G)$ and $c(s_1s_2)\notin c(\{y^a_{a_1}x^b_2,y^a_{a_2}x^b_3\})\big)$ or $\big(\{y^a_{1}x^b_3,y^a_{2}x^b_3\}\subseteq E(G)$ and $c(s_1s_2)\notin c(\{y^a_{1}x^b_3,y^a_{2}x^b_3\})\big)$ holds.
\item Both \( x_1^bx_3^b\in E(G) \) and $c(x_1^bx_3^b)\neq c(s_1s_2)$ hold.
\item For $i\in\{2,3\}$, it holds that $c(x^b_1x^b_3)=c(s_1s_i)$, $\{y^a_{1}x^b_2,y^a_{2}x^b_2\}\subseteq E(G)$, and $c(s_1s_2)$ $\notin$ $c(\{y^a_{1}x^b_2,$ $y^a_{2}x^b_2\})$.
\item It holds that $c(s_1s_3)=c(x^b_2x^b_3)$, $\{y^a_{1}x^b_2,y^a_{2}x^b_2\}\subseteq E(G)$, and $c(s_1s_2)$ $\notin$ $c(\{y^a_{1}x^b_2,$ $y^a_{2}x^b_2\})$.
\item There are $i\in([t]-\{a\})$, $i_1\in[2]$ and $i_2\in\{2,3\}$ such that for $B_4=\{y^a_{i_1}x^b_{i_2},y^a_1s_1,y^a_2s_1,y^i_1s_2,y^i_2s_2\}$, it holds that $B_4\subseteq E(G)$ and $c(x^b_1x^b_3)=c(s_2s_2)$.
\item There are $i\in([t]-\{a\})$ and $z\in\{s_1,x^b_2\}$ such that $c(x^b_1x^b_3)=c(y^i_1y^i_2)$, $\{zy^i_1,zy^i_2\}\subseteq E(G)$, and $c(s_1s_2)\notin c(\{zy^i_1,zy^i_2\})$. 
\item There are $i\in([t]-\{a\})$, $j\in ([k-1]-\{b\})$, $i_1\in[2]$, and $j_1\in\{1,3\}$ such that $c(x^b_1x^b_3)=c(x^j_{j_1}x^j_2)$, $y^i_{i_1}x^j_{j_1}\in E(G)$,  and $c(s_1s_2)\neq c(y^i_{i_1}x^j_{j_1})$.
\item There is a subset $B_5$ of $\{s_ix^b_j:i,j\in[3]\}$ such that $|B_5|=6$, $B_5\subseteq E(G)$, and $c(s_1s_2)\not\in c(B_5)$.
\end{enumerate}
Let's consider a situation where none of the above conditions are true. In this case, we can observe that $|\mathcal{S}_{2.4}(1,2)| \geq 4t + 2(t - 1)$, $|\mathcal{S}_{2.4}(3,4,5)| \geq 11(k-2)$, and $|\mathcal{S}_{2.4}(6,\ldots,14) \cup \{s_1s_3,s_2s_3\}| \geq 13$. From this, and since $k \geq 2$, we can conclude that
  \begin{align*}
  |c(K_n)|\leq & \frac{1}{2}(3k+2t)(3k+2t-1)-\big(4t+2(t-1)+11(k-1)+13\big)\\
  =& \frac{1}{2}(3k+2t-3)(3k+2t-4)+2-(2k-3)\\
  \leq &\frac{1}{2}(3k+2t-3)(3k+2t-4)+1,\text{a contradiction}.
\end{align*}
{\bf Scenario 2.5:}  Let \(|c(K_n[S])| = 3\) and \(|c(K_n[S]) - c(H)| = 1\). Additionally, for two distinct elements \(a\) and \(b\) in the set \([k-1]\), we have \(c(P_3^a) \cap c(K_n[S]) \neq \emptyset\) and \(c(P_3^b) \cap c(K_n[S]) \neq \emptyset\). We also assume that \(c(s_1s_2) \notin c(H)\), \(c(x_1^a x_2^a) = c(s_1s_3)\), and \(c(x_1^b x_2^b) = c(s_2s_3)\). 
Let \(Z = \{s_3, x_1^a, x_1^b\}\). Holding one of the following statements true will enable us, either directly or by employing one of the Scenarios 2.3 and 2.4, to create a rainbow subgraph of \(K_n\) that is isomorphic to \(kP_3 \cup tP_2\).
\begin{enumerate}
  \item There are $i\in[t]$, $i_1\in[2]$ and $z\in Z$ such that $y^i_{i_1}z\in E(G)$ and $c(s_1s_2)\neq c(y^i_{i_1}z)$.
  \item There is $i\in([k-1]-\{a,b\})$ such that $|\{x^i_{i_1}s_j:i_1,j\in[3]\}\cap E(G)|\geq7$.
  \item There are $i\in([k-1]-\{a,b\})$ and $j\in\{a,b\}$ such that $|\{x^i_{i_1}x^j_{j_1}:i_1,j_1\in[3]\}\cap E(G)|\geq7$.
  \item For $j\in\{a,b\}$, there is a subset $D_1$ of $\{s_ix^j_{j_1}:i,j_1\in[3]\}$ such that $|D_1|=6$, $D_1\subseteq E(G)$, and $c(s_1s_2)\not\in c(D_1)$.
  \item There is a subset $D_2$ of $\{x^a_{a_1}x^b_{b_1}:a_1,b_1\in[3]\}$ such that $|D_2|=6$, $D_2\subseteq E(G)$, and $c(s_1s_2)\not\in c(D_2)$.
  \item For \( j \in \{ a, b \} \), both \( x^j_1 x^j_3 \in E(G) \) and \( c(s_1 s_2) \neq c(x^j_1 x^j_3) \) occur.
   \item There are $i\in[t]$ and $i_1\in[2]$ such that either $\big(c(x^a_1x^a_3)=c(s_1s_3)$, $y^i_{i_1}x^a_2\in E(G)$ and $c(y^i_{i_1}x^a_2)\neq c(s_1s_2)\big)$ or $\big(c(x^b_1x^b_3)=c(s_2s_3)$, $y^i_{i_1}x^b_2\in E(G)$ and $c(y^i_{i_1}x^b_2)\neq c(s_1s_2)\big)$ holds.
 \item For $i\in\{a,b\}$, there are $j\in[t]$ and $j_1\in[2]$ such that $c(x^j_1x^j_3)=c(s_1s_2)$ and $y^i_{i_1}x^j_2\in E(G)$.
 \item There are $i\in[t]$, $i_1\in[2]$, and $i_2\in([2]-\{i_1\})$ such that $c(x^a_1x^a_3)=c(s_2s_3)$, $c(x^b_1x^b_3)=c(s_1s_3)$, $\{y^i_{i_1}x^a_2,y^i_{i_2}x^b_2\}\subseteq E(G)$, and $c(s_1s_2)\not\in c(\{y^i_{i_1}x^a_2,y^i_{i_2}x^b_2\})$.
 \item For $i\in\{a,b\}$, there are $j\in[t]$, $h\in([t]-\{j\})$, $h_1\in[2]$, and $h_2\in([2]-\{i_1\})$ such that either $\big(c(x^i_1x^i_3)=c(y^j_1y^j_2)$, $\{y^j_1x^i_2,y^j_2x^i_2\}\subseteq E(G)$, and $c(s_1s_2)\not\in c(\{y^j_1x^i_2,y^j_2x^i_2\})\big)$ or $\big(c(x^i_1x^i_3)=c(y^j_1y^j_2)$, $\{y^j_1y^h_{h_1},y^j_2y^h_{h_2}\}\subseteq E(G)$, and $c(s_1s_2)\not\in c(\{y^j_1y^h_{h_1},y^j_2y^h_{h_2}\})\big)$ holds. 
 \item For $i\in\{a,b\}$, there are $j\in([k-1]-\{a,b\})$, $j_1\in\{1,3\}$, $h\in[t]$, and $h_1\in[2]$ such that $c(x^i_1x^i_3)=c(x^j_{j_1}x^j_2)$, $y^h_{h_1}x^j_{j_1}\in E(G)$, and $c(s_1s_2)\neq c(y^h_{h_1}x^j_{j_1})$.
\end{enumerate}
Assuming that none of the above qualifications is true, we find that $|\mathcal{S}_{2.5}(1)| \geq 6t$, $|\mathcal{S}_{2.5}(2,3)| \geq 9(k-3)$, and $|\mathcal{S}_{2.5}(4,\ldots,11) \cup \{s_1s_3,s_2s_3\}| \geq 20$. From this, we can deduce that
 \begin{align*}
  |c(K_n)|\leq & \frac{1}{2}(3k+2t)(3k+2t-1)-\big(6t+9(k-3)+20\big)\\
  = &\frac{1}{2}(3k+2t-3)(3k+2t-4)+1,\text{a contradiction}.
\end{align*}
{\bf Scenario 2.6:} Let \(c(s_1s_2) \notin c(H)\), and \(c(s_1s_3) = c(s_2s_3) = c(x^a_1x^a_2)\), where \(a \in [k-1]\). Additionally, we assume that \(|c(K_n[V(P^a_3)])| = 3\). 
In this case, we first redefine the set \(S\) and the path \(P_3^a\) as follows: let \(S = \{x^a_i : i \in [3]\}\), \(V(P^a_3) = \{s_i : i \in [3]\}\), and \(E(P_3^a) = \{s_1s_2, s_2s_3\}\). After this redefinition, we can apply one of the scenarios from 2.1 to 2.5 to obtain the desired result.\\
{\bf Scenario 2.7:} Let \(c(s_1s_2) \notin c(H)\), and \(c(s_1s_3) = c(s_2s_3) = c(x^a_1x^a_2)\), where \(a \in [k-1]\). Plus, $c(s_3x^a_1)\not\in (c(H)-\{c(x^a_1x^a_2)\})$ and \(|c(K_n[V(P^a_3)])| =2\).
 In this case, if one of the following holds, then we can construct a rainbow subgraph of \(K_n\) that is isomorphic to \(kP_3 \cup tP_2\). 
 \begin{enumerate}
    \item For $z\in\{s_3,x^a_1\}$, there are $i\in[t]$ and $j\in[2]$ such that $zy^i_j\in E(G)$ and $c(s_1s_2)\neq c(zy^i_j)$.
    \item There are $i\in[t]$, $i_1\in[2]$, $i_2\in([2]-\{i_1\})$, $z_1\in\{s_1,s_2\}$, and $z_2\in\{x^a_2,x^a_3\}$ such that $\{y^i_{i_1}z_1,y^i_{i_2}z_2\}\subseteq E(G)$ and $c(s_1s_2),c(s_3x^a_1)\not\in c(\{y^i_{i_1}z_1,y^i_{i_2}z_2\})$.
    \item There are $i\in[t]$, $i_1\in[2]$, $j\in([k-1]-\{a\})$, $j_1\in\{1,3\}$, $z\in\{s_3,x_1^a\}$, and $j_2\in(\{1,3\}-\{j_1\})$ such that $\{zx^j_{j_1},x^j_{j_2}y^i_{i_1}\}\subseteq E(G)$ and $c(s_1s_2)\not\in c(\{zx^j_{j_1},x^j_{j_2}y^i_{i_1}\})$.
    \item There are $i\in[t]$, $i_1\in[2]$, $j\in([k-1]-\{a\})$, $j_1\in\{1,3\}$, and $z\in\{s_3,x_1^a\}$ such that $\{zx^j_{2},x^j_{j_1}y^i_{i_1}\}\subseteq E(G)$ and $c(s_1s_2)\not\in c(\{zx^j_{2},x^j_{j_1}y^i_{i_1}\})$.
     \item There are $i\in[t]$, $i_1\in[2]$, $j\in([k-1]-\{a\})$, and $z\in\{s_3,x_1^a\}$ such that $\{zx^j_{1},zx^j_3,x^j_{2}y^i_{i_1}\}\subseteq E(G)$ and $c(s_1s_2)\not\in c(\{zx^j_{1},zx^j_3,x^j_{2}y^i_{i_1}\})$.
      \item For two distinct elements $j_1,j_2\in[3]$, there are $i\in[t]$, $i_1\in[2]$, \( j \in ([k-1] - \{a\}) \),  $z_1\in\{s_1,s_2\}$, $z_2\in\{x^a_2,x^a_3\}$, and $j_3\in([3]-\{j_1,j_2\})$ such that $\{z_1x^j_{j_1},z_2x^j_{j_2}, x^j_{j_3}y^i_{i_1}\}\subseteq E(G)$ and $c(s_1s_2)\not\in c(\{z_1x^j_{j_1},z_2x^j_{j_2}, x^j_{j_3}y^i_{i_1}\})$.
      \item Both $ x_1^ax_3^a\in E(G)$ and $c(s_1s_2)\neq c(x_1^ax_3^a)$ hold. 
       \item There is a subset $F$ of $\{s_ix^a_{a_1}:i,a_1\in[3]\}$ such that $|F|=6$, $F\subseteq E(G)$, and $c(s_1s_2)\not\in c(F)$.
 \end{enumerate}
Assuming none of the previously mentioned conditions is true, we conclude that \( |\mathcal{S}_{2.7}(1,2)| \geq 8t \), \( |\mathcal{S}_{2.7}(3,\ldots,6)| \geq 12(k-2) \), and \( |\mathcal{S}_{2.7}(7,8) \cup \{s_1s_3,s_2s_3\}| \geq 7 \). From this, and $k,t\geq2$, we can find that 
\begin{align*}
  |c(K_n)|\leq & \frac{1}{2}(3k+2t)(3k+2t-1)-(8t+12(k-2)+7)\\
  =& \frac{1}{2}(3k+2t-3)(3k+2t-4)+2-(3k+2t-9)\\
  \leq &\frac{1}{2}(3k+2t-3)(3k+2t-4)+1, \text{a contradiction}.
\end{align*}
 {\bf Scenario 2.8:} Let \(c(s_1s_2) \notin c(H)\), and \(c(s_1s_3) = c(s_2s_3) = c(x^a_1x^a_2)\), where \(a \in [k-1]\). Plus, $c(s_3x^a_1)=c(s_1s_2)$ and \(|c(K_n[V(P^a_3)])| =2\).
 In this case, if one of the following holds, then we can construct a rainbow subgraph of \(K_n\) that is isomorphic to \(kP_3 \cup tP_2\). 
 \begin{enumerate}
   \item There are $i\in[t]$, $j\in[2]$, and $z\in\{s_1,s_2,s_3,x^a_1\}$ such that $zy^i_j\in E(G)$ and $c(s_1s_2)\neq c(zy^i_j)$. 
    \item There are $i\in[t]$, $i_1\in[2]$, $j\in([k-1]-\{a\})$, $j_1\in\{1,3\}$, $z\in\{s_3,x_1^a\}$, and $j_2\in(\{1,3\}-\{j_1\})$ such that $\{zx^j_{j_1},x^j_{j_2}y^i_{i_1}\}\subseteq E(G)$ and $c(s_1s_2)\not\in c(\{zx^j_{j_1},x^j_{j_2}y^i_{i_1}\})$.
   \item There are $i\in[t]$, $i_1\in[2]$, $j\in([k-1]-\{a\})$, $j_1\in\{1,3\}$, and $z\in\{s_3,x_1^a\}$ such that $\{zx^j_{2},x^j_{j_1}y^i_{i_1}\}\subseteq E(G)$ and $c(s_1s_2)\not\in c(\{zx^j_{2},x^j_{j_1}y^i_{i_1}\})$.
   \item There are $i\in[t]$, $i_1\in[2]$, $j\in([k-1]-\{a\})$, and $z\in\{s_3,x_1^a\}$ such that $\{zx^j_{1},zx^j_3,x^j_{2}y^i_{i_1}\}\subseteq E(G)$ and  $c(s_1s_2)\not\in c(\{zx^j_{1},zx^j_3,x^j_{2}y^i_{i_1}\})$. 
   \item There are  $j\in([k-1]-\{a\})$, $j_1\in\{1,3\}$, and  $j_2\in([3]-\{j_1\})$ such that $\{s_1x^j_{j_1},s_2x^j_{j_1},x^a_3x^j_{j_2}\}\subseteq E(G)$ and $c(s_1s_2)\not\in c(\{s_1x^j_{j_1},s_2x^j_{j_1},x^a_3x^j_{j_2}\})$.
   \item There are $j\in([k-1]-\{a\})$ and $j_1\in\{1,3\}$ such that $\{s_1x^j_{2},s_2x^j_{2},x^a_3x^j_{j_1},x_1^jx_3^j\}\subseteq E(G)$ and  $c(s_1s_2)\not\in c(\{s_1x^j_{2},s_2x^j_{2},x^a_3x^j_{j_1},x_1^jx_3^j\})$.
   \item Both \( x_1^ax_3^a\in E(G) \) and $c(s_1s_2)\neq c(x_1^ax_3^a)$ hold.
   \item There are a subset $F_1$ of $\{s_ix^a_j:i\in[2],j\in[3]\}\cup\{s_3x^a_j:j\in\{2,3\}\}$ such that $F_1\subseteq E(G)$, $|F_1|=6$, and $c(s_1s_2)\not\in c(F_1)$.
 \end{enumerate}
Let's assume that none of the statements mentioned above are true. We can observe that \( |\mathcal{S}_{2.8}(1)| \geq 8t \), \( |\mathcal{S}_{2.8}(2,\ldots,6)| \geq 9(k-2) \), and \( |\mathcal{S}_{2.8}(7,8) \cup \{s_1s_3,s_2s_3\}| \geq 7 \). Given that \( t \geq 2 \), we can derive that
\begin{align*}
  |c(K_n)|\leq & \frac{1}{2}(3k+2t)(3k+2t-1)-(8t+9(k-2)+7)\\
  =& \frac{1}{2}(3k+2t-3)(3k+2t-4)+2-(2t-3)\\
  \leq &\frac{1}{2}(3k+2t-3)(3k+2t-4)+1, \text{a contradiction}.
\end{align*} 
{\bf Scenario 2.9:} Let \(c(s_1s_2) \notin c(H)\), and \(c(s_1s_3) = c(s_2s_3) = c(x^a_1x^a_2)\), where \(a \in [k-1]\). Plus, $c(s_3x^a_1)=c(x^a_2x^a_3)$ and \(|c(K_n[V(P^a_3)])| =2\).
 In this scenario, possessing one of the following will enable us to construct a rainbow subgraph of \(K_n\) that is isomorphic to \(kP_3 \cup tP_2\). 
 \begin{enumerate}
   \item There are $i\in[t]$, $j\in[2]$, and $z\in\{s_3,x^a_1,x^a_2,x^a_3\}$ such that $zy^i_j\in E(G)$ and $c(s_1s_2)\neq c(zy^i_j)$. 
   \item There are $i\in[t]$, $i_1\in[2]$, $j\in([k-1]-\{a\})$, $j_1\in\{1,3\}$, $z\in\{s_3,x_1^a\}$, and $j_2\in(\{1,3\}-\{j_1\})$ such that $\{zx^j_{j_1},x^j_{j_2}y^i_{i_1}\}\subseteq E(G)$ and $c(s_1s_2)\not\in c(\{zx^j_{j_1},x^j_{j_2}y^i_{i_1}\})$.
   \item There are $i\in[t]$, $i_1\in[2]$, $j\in([k-1]-\{a\})$, $j_1\in\{1,3\}$, and $z\in\{s_3,x_1^a\}$ such that $\{zx^j_{2},x^j_{j_1}y^i_{i_1}\}\subseteq E(G)$ and $c(s_1s_2)\not\in c(\{zx^j_{2},x^j_{j_1}y^i_{i_1}\})$.
   \item There are $i\in[t]$, $i_1\in[2]$, $j\in([k-1]-\{a\})$, and $z\in\{s_3,x_1^a\}$ such that $\{zx^j_{1},zx^j_3,x^j_{2}y^i_{i_1}\}\subseteq E(G)$ and $c(s_1s_2)\not\in c(\{zx^j_{1},zx^j_3,x^j_{2}y^i_{i_1}\})$.
    \item There are  $j\in([k-1]-\{a\})$, $j_1\in\{1,3\}$, and $j_2\in([3]-\{j_1\})$ such that $\{x^a_2x^j_{j_1},x^a_3x^j_{j_1},s_1x^j_{j_2}\}\subseteq E(G)$. 
   \item There are $j\in([k-1]-\{a\})$ and $j_1\in\{1,3\}$ such that $\{x^a_2x^j_{2},x^a_3x^j_{2},s_1x^j_{j_1},x_1^jx_3^j\}\subseteq E(G)$.
    \item Both \( x_1^ax_3^a\in E(G) \) and $c(s_1s_2)\neq c(x_1^ax_3^a)$ hold.
   \item There are a subset $F_2$ of $\{s_ix^a_j:i\in[2],j\in[3]\}\cup\{s_3x^a_j:j\in\{2,3\}\}$ such that $F_2\subseteq E(G)$, $|F_2|=6$, and $c(s_1s_2)\not\in c(F_1)$.
 \end{enumerate}
 Let's consider that none of the conditions mentioned above are true. We can see that \( |\mathcal{S}_{2.9}(1)| \geq 8t \), \( |\mathcal{S}_{2.9}(2,\ldots,6)| \geq 9(k-2) \), and \( |\mathcal{S}_{2.9}(7,8) \cup \{s_1s_3,s_2s_3\}| \geq 7 \). From this, and \( t \geq 2 \), we can find that
\begin{align*}
  |c(K_n)|\leq & \frac{1}{2}(3k+2t)(3k+2t-1)-(8t+9(k-2)+7)\\
  =& \frac{1}{2}(3k+2t-3)(3k+2t-4)+2-(2t-3)\\
  \leq &\frac{1}{2}(3k+2t-3)(3k+2t-4)+1, \text{a contradiction}.
\end{align*} 
 {\bf Scenario 2.10:} Let \(c(s_1s_2) \notin c(H)\) and \(c(s_1s_3) = c(s_2s_3) = c(x^a_1x^a_2)\), where \(a \in [k-1]\). Additionally, \(c(s_3x^a_1) = c(y^l_1y^l_2)\), where \(l \in [t]\).
In this case, if any of the following conditions hold, we can either construct a rainbow subgraph of \(K_n\) that is isomorphic to \(kP_3 \cup tP_2\) or redefine the set \(S\) and the graph \(H\). We can then apply one of the scenarios from sections 2.6 to 2.9 to achieve the desired outcome.
 \begin{enumerate}
   \item There are $i\in[t]$, $j\in[2]$, and $z\in\{s_3,x^a_1\}$ such that $zy^i_j\in E(G)$ and $c(s_1s_2)\neq c(zy^i_j)$. 
    \item There is $i\in([t]-\{l\})$, $i_1\in[2]$, and $i_2\in([2]-\{i_1\})$ such that $\{y^i_{i_1}y^l_{1},y^i_{i_2}y^l_{2}\}\subseteq E(G)$ and $c(s_1s_2)\not\in c(\{y^i_{i_1}y^l_{1},y^i_{i_2}y^l_{2}\})$.
   \item There are $i\in[t]$, $i_1\in[2]$, $j\in([k-1]-\{a\})$, $j_1\in\{1,3\}$, $z\in\{s_3,x_1^a\}$, and $j_2\in(\{1,3\}-\{j_1\})$ such that $\{zx^j_{j_1},x^j_{j_2}y^i_{i_1}\}\subseteq E(G)$ and $c(s_1s_2)\not\in c(\{zx^j_{j_1},x^j_{j_2}y^i_{i_1}\})$.
   \item There are $i\in[t]$, $i_1\in[2]$, $j\in([k-1]-\{a\})$, $j_1\in\{1,3\}$, and $z\in\{s_3,x_1^a\}$ such that $\{zx^j_{2},x^j_{j_1}y^i_{i_1}\}\subseteq E(G)$ and $c(s_1s_2)\not\in c(\{zx^j_{2},x^j_{j_1}y^i_{i_1}\})$.
   \item There are $i\in[t]$, $i_1\in[2]$, $j\in([k-1]-\{a\})$, and $z\in\{s_3,x_1^a\}$ such that $\{zx^j_{1},zx^j_3,x^j_{2}y^i_{i_1}\}\subseteq E(G)$ and $c(s_1s_2)\not\in c(\{zx^j_{1},zx^j_3,x^j_{2}y^i_{i_1}\})$.
   \item There are $j\in([k-1]-\{a\})$, $j_1\in[3]$, and $j_2\in([3]-\{j_1\})$ such that $\{y^l_1x^j_{j_1},y^l_2x^j_{j_2}\}\subseteq E(G)$ and $c(s_1s_2)\not\in c(\{y^l_1x^j_{j_1},y^l_2x^j_{j_2}\})$.
  \item There are $j\in([k-1]-\{a\})$ and $j_1\in\{1,3\}$ such that $\{y^l_1x^j_{j_1},y^l_2x^j_{j_1}\}\subseteq E(G)$ and $c(s_1s_2)\not\in c(\{y^l_1x^j_{j_1},y^l_2x^j_{j_1}\})$.
  \item There are $i\in[t]$ and $j\in[2]$ such that $\{s_3x^a_{2},x^a_3y^i_{j}\}\subseteq E(G)$ and $c(s_1s_2)\not\in c(\{s_3x^a_{2},x^a_3y^i_{j}\})$.
  \item There are $i\in[2]$ such that $\{s_3x^a_{2},s_ix^a_3\}\subseteq E(G)$ and $c(s_1s_2)\not\in c(\{s_3x^a_{2},s_ix^a_3\})$.
  \item There are  \(i\in[t]\), \(i_1,j\in[2]\), and \(i_2\in([2]-\{i_1\})\) such that either \(\big(\{s_3x^a_3,s_jy^i_{i_1},$ $x^a_2y^i_{i_2}\}\subseteq E(G)\) and $c(s_1s_2)\not\in c(\{s_3x^a_3,s_jy^i_{i_1},x^a_2y^i_{i_2}\})\big)$, \(\big(\{s_3x^a_3,s_jy^i_{i_1},x^a_3y^i_{i_2}\}\subseteq E(G)\) and $c(s_1s_2)\not\in c(\{s_3x^a_3,s_jy^i_{i_1},x^a_3y^i_{i_2}\})\big)$, or \(\big(\{s_3x^a_3,x^a_2y^i_{i_1},x^a_3y^i_{i_2}\}\subseteq E(G)\) and  $c(s_1s_2)\not\in c(\{s_3x^a_3,x^a_2y^i_{i_1},x^a_3y^i_{i_2}\})\big)$ holds.
  \item There are $i\in[t]$, $i_1,j_1\in[2]$, $i_2\in([2]-\{i_1\})$, and $j_2\in([3]-\{1\})$ such that either $\{x^a_1s_{j_1},y^i_{i_1}s_{1},y^i_{i_2}x^a_{j_2}\}$, $\{x^a_1s_{j_1},y^i_{i_1}s_{2},y^i_{i_2}x^a_{j_2}\}$, or $\{x^a_1s_{j_1},y^i_{i_1}s_{1},y^i_{i_2}s_2\}$ holds.
   \item There are $i_1\in[2]$, $i_2\in([2]-\{i_1\})$, $j_1\in[2]$, and $j_2\in\{2,3\}$ such that $\{y^l_{i_1}s_{j_1},y^l_{i_2}x^a_{j_2}\}\subseteq E(G)$ and $c(s_1s_2)\not\in c(\{y^l_{i_1}s_{j_1},y^l_{i_2}x^a_{j_2}\})$.
    \item There are $i\in([t]-\{l\})$, $i_1\in[2]$, $i_2\in([2]-\{i_1\})$, and $z\in\{s_1,s_2,x^a_2,x^a_3\}$ such that $\{y^l_1y^i_{i_1},y^l_2y^i_{i_1},y^i_{i_2}z\}\subseteq E(G)$ and $c(s_1s_2)\not\in c(\{y^l_1y^i_{i_1},y^l_2y^i_{i_1},y^i_{i_2}z\})$.
    \item There are $i\in([t]-\{l\})$, $i_1,l_1\in[2]$, $l_2\in([2]-\{l_1\})$, and $z\in\{s_1,s_2,x^a_2,x^a_3\}$ such that $\{y^l_{l_1}y^i_{i_1},y^l_{l_2}z\}\subseteq E(G)$ and $c(s_1s_2)\not\in c(\{y^l_{l_1}y^i_{i_1},y^l_{l_2}z\})$.
 \end{enumerate}
Assuming that none of the above conditions are true, we calculate that \( |\mathcal{S}_{2.10}(1,2)| \geq 4t + 2(t - 1) \), \( |\mathcal{S}_{2.10}(3,\ldots,7)| \geq 9(k - 2) \), and \( |\mathcal{S}_{2.10}(8,\ldots,14) \cup \{s_1s_3, s_2s_3\}| \geq 13 \). From this, we can conclude that
  \begin{align*}
  |c(K_n)|\leq & \frac{1}{2}(3k+2t)(3k+2t-1)-(4t+2(t-1)+9(k-2)+13)\\
  = &\frac{1}{2}(3k+2t-3)(3k+2t-4)+1, \text{a contradiction}.
\end{align*}
 {\bf Scenario 2.11:} Let \(c(s_1s_2) \notin c(H)\), and let \(c(s_1s_3) = c(s_2s_3) = c(x^a_1x^a_2)\), where \(a \in [k-1]\). Additionally, we have \(c(s_3x^a_1) = c(x^r_2x^r_3)\), where \(r \in ([k-1] - \{a\})\).
Under these conditions, if we hold one of the following assumptions, we can either construct a rainbow subgraph of \(K_n\) that is isomorphic to \(kP_3 \cup tP_2\) or redefine the set \(S\) and the graph \(H\). We can then apply one of the scenarios from Sections 2.6 to 2.10 to achieve the desired result.
 \begin{enumerate}
   \item There are $i\in[t]$, $j\in[2]$, and $z\in\{s_3,x^a_1,x_1^r,x_3^r\}$ such that $zy^i_j\in E(G)$ and $c(s_1s_2)\neq c(zy^i_j)$. 
    \item There are $i\in[t]$, $i_1\in[2]$, $j\in([k-1]-\{a,r\})$, $j_1\in\{1,3\}$, $z\in\{s_3,x_1^a,x^r_1,x^r_3\}$, and $j_2\in(\{1,3\}-\{j_1\})$ such that $\{zx^j_{j_1},x^j_{j_2}y^i_{i_1}\}\subseteq E(G)$ and $c(s_1s_2)\not\in c(\{zx^j_{j_1},x^j_{j_2}y^i_{i_1}\})$.
   \item There are $i\in[t]$, $i_1\in[2]$, $j\in([k-1]-\{a,r\})$, $j_1\in\{1,3\}$, and $z\in\{s_3,x_1^a,x^r_1,x^r_3\}$ such that $\{zx^j_{2},x^j_{j_1}y^i_{i_1}\}\subseteq E(G)$ and $c(s_1s_2)\not\in c(\{zx^j_{2},x^j_{j_1}y^i_{i_1}\})$.
   \item There are $i\in[t]$, $i_1\in[2]$,  $j\in([k-1]-\{a,r\})$, and $z\in\{s_3,x_1^a,x^r_1,x^r_3\}$ such that $\{zx^j_{1},zx^j_3,x^j_{2}y^i_{i_1}\}\subseteq E(G)$ and $c(s_1s_2)\not\in c(\{zx^j_{1},zx^j_3,x^j_{2}y^i_{i_1}\})$.
   \item There is $i\in\{a,r\}$ such that $|\{x^i_{i_1}s_j:i_1,j\in[3]\}\cap E(G)|\geq7$.
   \item There is a subset $A_1$ of $\{x^a_{a_1}x^r_{r_1}:a_1,r_1\in[3]\}$ such that $A_1\subseteq E(G)$, $|A_1|=7$, and $c(s_1s_2)\not\in c(A_1)$.
    \item There are $i\in[t]$ and $j\in[2]$ such that $\{s_3x^a_{2},x^a_3y^i_{j}\}\subseteq E(G)$ and $c(s_1s_2)\not\in c(\{s_3x^a_{2},x^a_3y^i_{j}\})$.
    \item There is $i\in[2]$ such that $\{s_3x^a_{2},s_ix^a_3\}\subseteq E(G)$ and $c(s_1s_2)\not\in c(\{s_3x^a_{2},s_ix^a_3\})$.
    \item There are \(i\in[t]\), \(i_1\in[2]\), $i_2\in([2]-\{i_1\})$,  and \(j\in[2]\) such that  either \(\big(\{s_3x^a_3,s_jy^i_{i_1},x^a_2y^i_{i_2}\}\subseteq E(G)\) and $c(s_1s_2)\not\in c(\{s_3x^a_3,s_jy^i_{i_1},x^a_2y^i_{i_2}\})\big)$, \(\big(\{s_3x^a_3,$ $s_jy^i_{i_1},$ $x^a_3y^i_{i_2}\}$ $\subseteq E(G)\) and $c(s_1s_2)\not\in c(\{s_3x^a_3,s_jy^i_{i_1},x^a_3y^i_{i_2})\big)$, or \(\big(\{s_3x^a_3,$ $x^a_2y^i_{i_1},$ $x^a_3y^i_{i_2}\}$ $\subseteq E(G)\) and $c(s_1s_2)\not\in c(\{s_3x^a_3,x^a_2y^i_{i_1},x^a_3y^i_{i_2}\})\big)$ holds.
    \item There are $i\in[t]$, $i_1\in[2]$, $i_2\in([2]-\{i_1\})$, $j_1\in[2]$, and $j_2\in([3]-\{1\})$ such that either $\{x^a_1s_{j_1},y^i_{i_1}s_{1},y^i_{i_2}x^a_{j_2}\}$, $\{x^a_1s_{j_1},y^i_{i_1}s_{2},y^i_{i_2}x^a_{j_2}\}$, or $\{x^a_1s_{j_1},y^i_{i_1}s_{1},y^i_{i_2}s_2\}$ is a subset of $E(G)$.
     \item There are $i\in[t]$, $i_1\in[2]$, $i_2\in([2]-\{i_1\})$, $j_1\in\{1,3\}$, and $j_2\in[2]$ such that  $\{s_3x^r_{j_1},s_{j_2}y^i_{i_1},x^r_2y^i_{i_2}\}\subseteq E(G)$ and $c(s_1s_2)\not\in c(\{s_3x^r_{j_1},s_{j_2}y^i_{i_1},x^r_2y^i_{i_2}\})$.
      \item There are $i\in[t]$, $i_1\in[2]$, and $j\in[3]$ such that  $\{s_jx^r_1,s_jx^r_3,y^i_{i_1}x^r_2\}\subseteq E(G)$ and $c(s_1s_2)\not\in c(\{s_jx^r_1,s_jx^r_3,y^i_{i_1}x^r_2\})$.
      \item There are $i\in[3]$ and $j\in\{2,3\}$ such that $\{s_ix^r_1,s_ix^r_3,x^a_jx^r_2\}\subseteq E(G)$ and $c(s_1s_2)\not\in c(\{s_ix^r_1,s_ix^r_3,x^a_jx^r_2\})$.
      \item There is $i\in\{a,r\}$ such that $x^i_1x^i_3\in E(G)$ and $c(s_1s_2)\neq c(x^i_1x^i_3)$.
 \end{enumerate}
 Let’s assume that none of the above statements are true. We can observe that \( |\mathcal{S}_{2.11}(1)| \geq 8t \), \( |\mathcal{S}_{2.11}(2,3,4)| \geq 12(k - 3) \), and \( |\mathcal{S}_{2.11}(5,\ldots,14) \cup \{s_1s_3, s_2s_3\}| \geq 16 \). Since \( t \geq 2 \) and \( k \geq 3 \), it follows that
  \begin{align*}
  |c(K_n)|\leq & \frac{1}{2}(3k+2t)(3k+2t-1)-(8t+9(k-3)+16)\\
  =&\frac{1}{2}(3k+2t-3)(3k+2t-4)+2-(3k+2t-12)\\
  \leq &\frac{1}{2}(3k+2t-3)(3k+2t-4)+1,\text{a contradiction}.
\end{align*}
 {\bf Scenario 2.12:} Let \(c_1=c(s_1s_2)=c(s_1s_3) \notin c(H)\), and \( c(s_2s_3) = c(y^l_1y^l_2)\), where \(l \in [t]\).
 In this case, if one of the following holds, then we can construct a rainbow subgraph of \(K_n\) that is isomorphic to \(kP_3 \cup tP_2\).
 \begin{enumerate}
 \item There are $i\in[t]$, $i_1\in[2]$, and $j\in\{2,3\}$ such that $s_jy^i_{i_1}\in E(G)$ and $c(s_jy^i_{i_1})\neq c_1$.
 \item There are $i\in([t]-\{l\})$, $i_1\in[2]$, and $i_2\in([2]-\{i_1\})$ such that $\{y^i_{i_1}y^l_1,y^i_{i_2}y^l_2\}\subseteq E(G)$ and $c_1\not\in c(\{y^i_{i_1}y^l_1,y^i_{i_2}y^l_2\})$.
  \item For $i\in[k-1]$, there is a subset $B_1$ of $\{s_{i_1}x^i_{i_2}:i_1,i_2\in[3]\}$ such that $B_1\subseteq E(G)$, $|B_1|=4$, and $c_1\not\in c(B_1)$.
   \item For $i\in[k-1]$, there is a subset $B_2$ of $\{y^l_{l_1}x^i_{i_1}:l_1\in[2],i_1\in[3]\}$ such that $B_2\subseteq E(G)$, $|B_2|=4$, and $c_1\not\in c(B_2)$.
   \item There are $i\in([t]-\{l\})$, $l_1,i_1\in[2]$, and $l_2\in([2]-\{l_1\})$ such that $\{s_1y^l_{l_1},y^l_{l_2}y^i_{i_1}\}$ $\subseteq E(G)$.
   \item There are $i\in([t]-\{l\})$ and $l_1\in[2]$ such that $\{s_1y^l_{l_1},s_1y^i_{1},s_1y^i_2,y^l_{l_1}y^i_1,y^l_{l_1}y^i_2\}$ $\subseteq E(G)$.
    \item It holds that $\{s_1y^l_1,s_1y^l_2\}\subseteq E(G)$. 
 \end{enumerate}
 If none of the above conditions are true, we can see that \( |\mathcal{S}_{2.12}(1,2)| \geq 4t + 2(t - 1) \), \( |\mathcal{S}_{2.12}(3,4)| \geq 9(k - 1) \), and \( |\mathcal{S}_{2.12}(5,\ldots,14) \cup \{s_2s_3\}| \geq 4 \). Thus, we have
  \begin{align*}
  |c(K_n)|\leq & \frac{1}{2}(3k+2t)(3k+2t-1)-(4t+2(t-1)+9(k-1)+4)\\
  =&\frac{1}{2}(3k+2t-3)(3k+2t-4)+1, \text{a contradiction}.
\end{align*}
{\bf Scenario 2.13:} Let \(c_1=c(s_1s_2)=c(s_1s_3) \notin c(H)\), and \( c(s_2s_3) = c(x^r_1x^r_2)\), where \(r \in [k-1]\).
 In this case, if one of the following holds, then we can either construct a rainbow subgraph of \(K_n\) that is isomorphic to \(kP_3 \cup tP_2\) or redefine the set \(S\) and the graph \(H\). We can then apply Section 2.12 to achieve the desired result.
  \begin{enumerate}
 \item There are $i\in[t]$, $i_1\in[2]$, and $z\in\{s_2,s_3,x^r_1,x^r_3\}$ such that $zy^i_{i_1}\in E(G)$ and $c_1\neq c(zy^i_{i_1})$.
 \item For $i\in([k-1]-\{r\})$, there is a subset $F_1$ of $\{s_{i_1}x^i_{i_2}:i_1,i_2\in[3]\}$ such that $F_1\subseteq E(G)$, $|F_1|=4$, and $c_1\not\in c(F_1)$.
  \item For $i\in([k-1]-\{r\})$, there is a subset $F_2$ of $\{x^r_{r_1}x^i_{i_1}:r_1,i_1\in[3]\}$ such that $F_2\subseteq E(G)$, $|F_2|=7$, and $c_1\not\in c(F_2)$.
  \item There is a subset $F_3$ of $\{s_{i_1}x^r_{r_1}:i_1,r_1\in[3]\}$ such that $F_3\subseteq E(G)$, $|F_3|=6$, and $c_1\not\in c(F_3)$.
  \item Both $x_1^rx_3^r\in E(G)$ and $c_1\neq c(x_1^rx_3^r)$ happen.
 \end{enumerate}
 Assuming that none of the previous statements are true, we calculate that \( |\mathcal{S}_{2.13}(1)| \geq 8t \), \( |\mathcal{S}_{2.13}(2,3)| \geq 9(k - 2) \), and \( |\mathcal{S}_{2.13}(4,5) \cup \{s_2s_3\}| \geq 7 \). Given that \( t \geq 2 \), we derive that
  \begin{align*}
  |c(K_n)|\leq & \frac{1}{2}(3k+2t)(3k+2t-1)-(8t+9(k-2)+7)\\
  =&\frac{1}{2}(3k+2t-3)(3k+2t-4)+2-(2t-3)\\
  \leq& \frac{1}{2}(3k+2t-3)(3k+2t-4)+1,\text{a contradiction}.
\end{align*}
{\bf Scenario 2.14:} Let \(c_1 = c(s_1s_2) = c(s_1s_3) = c(s_2s_3) \notin c(H)\). In this case, if any of the following conditions occur, then we can construct a rainbow subgraph of \(K_n\) that is isomorphic to \(kP_3 \cup tP_2\).
  \begin{enumerate}
 \item There are $i\in[t]$, $i_1\in[2]$, and $j\in[3]$ such that $y^i_{i_1}s_j\in E(G)$ and $c_1\neq c(y^i_{i_1}s_j)$.
 \item For $i\in[k-1]$, there is a subset $A_1$ of $\{s_{i_1}x^i_{i_2}:i_1,i_2\in[3]\}\cup \{x^i_1x^i_3\}$ such that $A_1\subseteq E(G)$ and $|A_1|=2$, and $c_1\not\in c(A_1)$.
 \end{enumerate}
 If neither of the two conditions above holds, we can see that \( |\mathcal{S}_{2.14}(1)| \geq 6t \) and \( |\mathcal{S}_{2.14}(2)| \geq 9(k - 1) \). Therefore, since \( |E(G)\cap\{s_1s_2,s_1s_3,s_2s_3\}|\leq 1 \), it follows that
  \begin{align*}
  |c(K_n)|\leq & \frac{1}{2}(3k+2t)(3k+2t-1)-(6t+9(k-1)+2)\\
  =&\frac{1}{2}(3k+2t-3)(3k+2t-4)+1, \text{a contradiction}.
\end{align*}
{\bf Scenario 2.15:} Let \( c(K_n[S]) \subseteq c(H) \). Moreover, suppose that \( c(x^a_1x^a_2) \in c(K_n[S]) \), where \( a \in [k-1] \). Under these conditions, if any of the following statements are true, we can either construct a rainbow subgraph of \( K_n \) that is isomorphic to \( kP_3 \cup tP_2 \), or we can apply one of the scenarios from 2.1 to 2.14 to achieve the desired result.
\begin{enumerate}
  \item There are $i\in[t]$, $j_1\in[3]$, and $j_2\in([3]-\{j_1\})$ such that $\{y^i_1s_{j_1},y^i_2s_{j_2}\}\subseteq E(G)$.
  \item There are $i\in[t]$ and $i_1\in[2]$ such that  $|\{y^i_{i_1}s_i:i\in[3]\}\cap E(G)|=3$.
   \item There are $i\in[t]$ and $i_1\in[2]$ such that $x^a_{1}y^i_{i_1}\in E(G)$.
  \item For each \( i \in [k-1] \), there are two distinct elements \( i_1, i_2 \in [3] \), as well as two other distinct elements \( j_1, j_2 \in [3] \), not necessarily different from $i_1$ and $i_2$,   such that \( \{ x^i_{i_1} s_{j_1}, x^i_{i_2} s_{j_2} \} \subseteq E(G) \).
    \item There is $i\in([k-1]-\{a\})$ such that $|\{x^i_{i_1}x^a_{a_1}:i_1,a_1\in[3]\}\cap E(G)|\geq7$.
  \item There is $i\in[3]$ such that $|\{x^a_{i}s_j:j\in[3]\}\cap E(G)|=3$.
  \item It holds that $x^a_1x^a_3\in E(G)$.
\end{enumerate}
Let’s assume that none of the previous statements are true. Thus, we can conclude that \( |\mathcal{S}_{2.15}(1,2,3)| \geq 6t \), \( |\mathcal{S}_{2.15}(4,5)| \geq 6(k - 1) + 3(k - 2) \), and \( |\mathcal{S}_{2.15}(6,7)\cup\{s_1s_2,s_1s_3,s_2s_3\}| \geq 5 \). Therefore, we have that
 \begin{align*}
  |c(K_n)|\leq & \frac{1}{2}(3k+2t)(3k+2t-1)-(6t+6(k-1)+3(k-2)+5)\\
  =&\frac{1}{2}(3k+2t-3)(3k+2t-4)+1, \text{a contradiction}.
\end{align*}
{\bf Scenario 2.16:} Let \( c(K_n[S]) \subseteq c(H) \). Moreover, suppose that \( c(y^b_1y^b_2) \in c(K_n[S]) \), where \( b \in [t] \). Under these conditions, if any of the following statements are true, we can either construct a rainbow subgraph of \( K_n \) that is isomorphic to \( kP_3 \cup tP_2 \), or we can apply one of the scenarios from 2.1 to 2.14 to achieve the desired result.
\begin{enumerate}
  \item There are $i\in[t]$, $j_1\in[3]$, and $j_2\in([3]-\{j_1\})$ such that $\{y^i_1s_{j_1},y^i_2s_{j_2}\}\subseteq E(G)$.
  \item There are $i\in([t]-\{b\})$, $i_1\in[2]$, and $i_2\in([2]-\{i_1\})$ such that $\{y^i_{i_1}y^b_1,y^i_{i_2}y^b_2\}\subseteq E(G)$.
  \item There are $i\in([t]-\{b\})$ and $i_1,i_2\in[2]$ such that $\{y^b_{i_1}y^i_1,y^b_{i_1}y^i_2\}\subseteq E(G)$ and $|\{y^i_{i_2}s_i:i\in[3]\}\cap E(G)|=3$.
  \item There are $i\in([t]-\{b\})$, $j\in[k-1]$, $i_1\in[2]$, $i_2\in([2]-\{i_1\})$, and $j_1\in\{1,3\}$ such that $\{y^b_{1}y^i_{i_1},y^b_{2}y^i_{i_1}\}\subseteq E(G)$ and $y^i_{i_2}x^j_{j_1}\in E(G)$.
   \item For each \( i \in [k-1] \), there are two distinct elements \( i_1, i_2 \in [3] \), as well as two other distinct elements \( j_1, j_2 \in [3] \), not necessarily different from $i_1$ and $i_2$,   such that \( \{ x^i_{i_1} s_{j_1}, x^i_{i_2} s_{j_2} \} \subseteq E(G) \).
  \item There are $i\in[k-1]$, $i_1\in[3]$, and $i_2\in([3]-\{i_1\})$ such that $\{y^b_1x^i_{i_1},y^b_2x^i_{i_2}\}$ $\subseteq$ $E(G)$.
  \item There is $i\in[2]$ such that $|\{y^b_{i}s_j:j\in[3]\}\cap E(G)|=3$.
  \item There are  $i\in[k-1]$, $j\in[3]$, and $j_1\in[2]$ such that $|\{y^b_{j_1}s_h:h\in[3]\}\cap E(G)|=2$ and $|\{s_jx^i_{i_1}:i_1\in[3]\}\cap E(G)|=3$.
\end{enumerate}
Assuming that none of the above conditions are true, we can observe that \( |\mathcal{S}_{2.16}(1,2,3,4)| \geq 3t + 3(t-1) \), \( |\mathcal{S}_{2.16}(5,6)| \geq 9(k-1) \), and \( |\mathcal{S}_{2.16}(7,8) \cup \{s_1s_2, s_1s_3, s_2s_3\}| \geq 5 \). Thus, we conclude that
 \begin{align*}
  |c(K_n)|\leq & \frac{1}{2}(3k+2t)(3k+2t-1)-(3t+3(t-1)+9(k-1)+5)\\
  =&\frac{1}{2}(3k+2t-3)(3k+2t-4)+1, \text{a contradiction}.
\end{align*}

Based on Scenarios 2.1 to 2.16, we can conclude that if \( c(K_n) = \frac{1}{2}(3k + 2t - 3)(3k + 2t - 4) + 2 \), then the graph \( K_n \) contains a rainbow subgraph that is isomorphic to \( kP_3 \cup tP_2 \). Therefore, we have \( AR(n, kP_3 \cup tP_2) \leq \frac{1}{2}(3k + 2t - 3)(3k + 2t - 4) + 1 \). This conclusion, along with Case 1, completes the proof of Theorem \ref{th3}.

\section{Concluding remarks}

We have determined the anti‑Ramsey number of the linear forest \(kP_{3}\cup tP_{2}\) in the spanning regime $n = 3k+2t$, $k\ge 1$, and $t\ge 2$, namely
\[
\operatorname{AR}\bigl(3k+2t,\; kP_{3}\cup tP_{2}\bigr) = \frac12\bigl(3k+2t-3\bigr)\bigl(3k+2t-4\bigr)+1.
\]
This removes the quadratic lower bound on \(t\) required in previous work \cite{new-1}, where \(t\ge \frac{k^{2}-k+4}{2}\). Hence, our theorem provides a complete characterization for the critical case in which the host graph has exactly the same number of vertices as the forest.

A natural direction for future research is to improve the result of \cite{p1-10} for the non‑spanning case \(n\ge 3k+2t+1\) when \(t\) is small. Determining \(\operatorname{AR}(n,kP_{3}\cup tP_{2})\) under these conditions remains open.

%\section*{Acknowledgments}

%This research was supported by the NSFC under the grant 12131013.

\section*{Conflicts of interest} 

The authors declare no conflict of interest.

\section*{Data availability} 

No data was used in this investigation.

\end{document}